\newtheorem{thm}{Theorem}[section]
\newtheorem{lem}[thm]{Lemma}
\newtheorem{cor}[thm]{Corollary}
\newtheorem{prop}[thm]{Proposition}
\theoremstyle{definition}
\newtheorem{example}[thm]{Example}
\newtheorem{defn}[thm]{Definition}
\newtheorem{rem}[thm]{Remark}
\numberwithin{equation}{section}
\def\ggg{\mathfrak{g}}
\def\hhh{\mathfrak{h}}
\def\ad{{\rm ad}}
\def\Der{{\rm Der}}
\def\Inn{{\rm Inn}}
\def\frakd{{\mathfrak{D}}}
\def\bbf{\mathbb{F}}
\def\bbz{\mathbb{Z}}
\begin{document}

\title[2-local derivations on the Jacobson-Witt algebras in prime characteristic]{2-local derivations on the Jacobson-Witt algebras in prime characteristic}

\author{Yufeng Yao and Kaiming Zhao}

\address{Department of Mathematics, Shanghai Maritime University,
 Shanghai, 201306, China.}\email{yfyao@shmtu.edu.cn}

\address{Department of Mathematics, Wilfrid
Laurier University, Waterloo, ON, Canada N2L 3C5,  and School of Mathematical Sciences, Hebei Normal University, Shijiazhuang 050016, Hebei, China.} \email{kzhao@wlu.ca}

\subjclass[2010]{17B05,17B20,17B40, 17B50, 17B70}

\keywords{the Jacobson-Witt algebra, derivation, 2-local derivation, regular vector, centralizer}

\thanks{This work is supported by the National Natural Science Foundation of China (Grant Nos. 11771279, 11671138 and 11871190) and NSERC (311907-2015)}
\begin{abstract}
This paper initiates the study of 2-local derivations on Lie algebras over fields of prime characteristic. Let $\mathfrak{g}$ be a simple Jacobson-Witt algebra $W_n$ over a field of prime characteristic $p$ with cardinality no less than $p^n$. In this paper, we study properties of  2-local derivations on $\mathfrak{g}$, and  show that every 2-local derivation on $\mathfrak{g}$ is a derivation. 
\end{abstract}

\maketitle
\renewcommand{\theequation}{\arabic{section}.\arabic{equation}}

\section{Introduction}
As it is known, the derivation algebra of an   algebra $A$ plays an important role in the study of the structure of $A$.
In the theory of Lie algebras, a well-known result due to H. Zassenhaus states that all derivations on a finite dimensional Lie algebra with
nondegenerate Killing form are inner (cf. \cite{Jac}). In particular, finite dimensional  semisimple Lie algebras over an algebraically closed field of characteristic zero admit only inner derivations. Hence, they are isomorphic to their derivation algebras.

As a generalization of derivation, $\check{\text{S}}$emrl introduced the notion of 2-local derivation on algebras in \cite{Se}.
The concept of 2-local derivation is actually an important and interesting property for an algebra. The main problem in this subject is to determine
all 2-local derivations, and to see whether they are automatically (global) derivations. All 2-local derivations on several important classes of
Lie algebras have been determined. In \cite{ AKR}, it was shown that each 2-local derivation on a finite dimensional semisimple Lie algebra over an
algebraically closed field of characteristic zero is a derivation and each finite dimensional nilpotent Lie algebra with dimension larger than
two admits a 2-local derivation which is not a derivation. Furthermore, the authors in \cite{WCN} proved that all 2-local derivations on finite
dimensional basic classical Lie superalgebras except $A(n,n)$ over an algebraically closed field of characteristic zero are derivations.
Similar results on 2-local derivations on simple Leibniz algebras were obtained in \cite{AKO}.
All 2-local derivation on  Witt algebras and some of their subalgebras were shown to be derivations in \cite{ZCZ, AKY}.
Similar result was obtained quite recently for the $W$-algebra $W(2,2)$ in \cite{Ta}. In the present paper, we initiate the study of
2-local derivations on finite dimensional Lie algebras over an infinite field of positive characteristic. The algebras we concern   are the
so-called Jacobson-Witt algebras, which are the modular version of some generalized Witt algebras. Let us briefly introduce them below.

Different from the situation of characteristic zero, besides classical simple Lie algebras, there is another variety of simple Lie algebras,
the so-called simple Lie algebras of Cartan type, in the classification of finite dimensional simple Lie algebras over an algebraically
closed field $\mathbb{F}$ of prime characteristic $p>5$ (cf. \cite{PS}). The Lie algebras of Cartan type consist of four
families $W, S, H, K$ (cf. \cite{SF, St}). The algebras we focus on in the present paper are the first series.
The Jacobson-Witt algebra $W_n$ is the derivation algebra of the truncated polynomial algebra
$\mathfrak{A}_n=\bbf[x_1,\cdots, x_n]/(x_1^p,\cdots, x_n^p)$, where $(x_1^p,\cdots, x_n^p)$ is the ideal of $\bbf[x_1,\cdots, x_n]$ generated
by $x_i^p$, $1\leq i\leq n$. Then $W_n$ is a simple Lie algebra unless $n=1$ and $p=2$. Over the past decades, the representation theory of the Jacobson-Witt algebras was extensively studied (see \cite{Sh, HZ, SY}). The derivation algebra of $W_n$ was completely determined for $p>2$ (see \cite{SF, St}). This paper is devoted to studying 2-local derivations on $W_n$. Under a mild restriction on the size of the base field, we determine all 2-local derivations on the simple Jacobson-Witt algebras, and show that each 2-local derivation is a (global) derivation.

This paper is organized as follows. In section 2, we recall the basic notations, definitions, structure and some important properties of the Jacobson-Witt algebras. Section 3 is devoted to studying 2-local derivations on the simple
Jacobson-Witt algebras $W_n$ over a field of prime characteristic $p$ with cardinality no less than $p^n$. We present some properties of 2-local derivations, and show that every 2-local derivation on any simple Jacobson-Witt algebra is a derivation. Moreover, we give an example of the Jacobson-Witt algebra of rank 1, i.e. the so-called Witt algebra over a field of characteristic $p=2$, in which there exists a 2-local derivation that is not a derivation.

Similar to the study on structure of simple Lie algebras of positive characteristic, the study  on 2-local derivations of   Lie algebras of positive characteristic is very different and more difficult than the case of   characteristic $0$. We have to establish new and different methods (Lemmas 3.3 and 3.14) to achieve our goal.

\section{Notations and preliminaries}
In this paper, we always assume that  $\bbf$ is a field  of positive characteristic $p$, and let $\bbf_p$ denote the prime subfield of $\bbf$, and $\bbf^*=\bbf\setminus\{0\}$. Throughout this paper, all algebras and vector spaces  are over $\bbf$ and finite dimensional. We denote by $\mathbb{Z},  \mathbb{N},  \mathbb{Z}_+$ the set of all integers, nonnegative integers and positive integers respectively. For a set $S$, we use $|S|$ 
to denote the cardinality of $S$.

\subsection{Derivations and 2-local derivations on a Lie algebra}
A {\bf derivation} on a Lie algebra $\ggg$  is a linear transformation $D:\ggg\longrightarrow\ggg$ such that the following Leibniz law holds:
$$D([x, y])=[D(x), y]+[x, D(y)],\,\,\forall\,x,y\in\ggg.$$
The set of all derivations of $\ggg$ is denoted by $\Der(\ggg)$, which is a Lie algebra under the usual commutant operation. For each $x\in\ggg$, let
$$\ad x:\ggg\longrightarrow\ggg,\,\,\,\ad x (y)=[x, y], \,\,\forall\,y\in\ggg.$$ Then $\ad x$ is a derivation on $\ggg$ for any $x\in\ggg$, which is  called an inner derivation. The set of all inner derivations of $\ggg$ is denoted by $\Inn(\ggg)$,
which is an ideal of $\Der(\ggg)$.

A map $\Delta: \ggg\longrightarrow\ggg$ (not necessarily linear) is called  a {\bf 2-local derivation} if for any $x,y\in\ggg$,
there exists a derivation $D_{xy}\in\Der(\ggg)$ (depending on $x, y$) such that $\Delta(x)=D_{xy}(x)$ and $\Delta(y)=D_{xy}(y)$.
In particular, for any $x\in\ggg$ and $k\in\bbf$, there exists $D_{(kx)x}\in\Der(\ggg)$ such that
$$\Delta(kx)=D_{(kx)x}(kx)=kD_{(kx)x}(x)=k\Delta(x).$$
In particular,
\begin{equation}\label{a property}
\Delta(0)=0.
\end{equation}
Hence, a 2-local derivation $\Delta$ on $\ggg$ is a derivation if and only if $\Delta$ is additive and satisfies the Leibniz law, i.e.,
$$\Delta(x+y)=\Delta(x)+\Delta(y), \,\Delta([x,y])=[\Delta(x), y]+[x, \Delta(y)],\,\,\forall\,x,y\in\ggg.$$

\subsection{The Jacobson-Witt algebras} In this subsection, we recall the basic definitions and properties of the
Jacobson-Witt algebras which we concern in this paper. We use the terminology and notations in
\cite{SF, St}. For $n\in\mathbb{Z}_+$, set $$A_n=\{\alpha=(\alpha_1,\cdots, \alpha_n)\in\mathbb{N}^n :  0\leq \alpha_i\leq p-1, 1\leq i\leq n\},$$
$$\tau=(p-1, \cdots, p-1), \,\,\epsilon_i=(\delta_{i1},\cdots, \delta_{in})\,\text{ for}\,\, 1\leq i\leq n,$$
where
\begin{align*}
\delta_{ij}=\begin{cases} 1, &\mbox{ if } i=j;\cr
0, &\mbox{ otherwise.}
\end{cases}
\end{align*}
Let ${\mathfrak{A}}_n=\bbf[x_1,\cdots, x_n]/(x_1^p,\cdots, x_n^p)$ be the truncated polynomial algebra of $n$ variables $x_1,\cdots, x_n$, where $(x_1^p,\cdots, x_n^p)$ denotes the ideal of $\bbf[x_1,\cdots, x_n]$ generated by $x_i^p, 1\leq i\leq n$. For $\alpha=(\alpha_1,\cdots,\alpha_n)\in A_n$, set $|\alpha|=\sum_{i=1}^n\alpha_i, $ and use $x^{\alpha}:=x_1^{\alpha_1}\cdots x_n^{\alpha_n}$ to denote its canonical image in $\mathfrak{A}_n$ for brevity. Then $\mathfrak{A}_n$ has a basis $\{x^{\alpha} :  \alpha\in A_n\}$ with the multiplication subject to $x^{\alpha}x^{\beta}=x^{\alpha+\beta}$
with the convention that $x^{\alpha+\beta}=0$ if $\alpha+\beta\notin A_n$. Moreover, $\mathfrak{A}_n$ has a natural $\mathbb{Z}$-grading
$$\mathfrak{A}_n=\bigoplus\limits_{i=0}^{n(p-1)}(\mathfrak{A}_n)_{[i]},$$
where $(\mathfrak{A}_n)_{[i]}=\text{span}_{\mathbb{F}}\{x^{\alpha} :  |\alpha|=i\}$. For $1\leq i\leq n$, let $D_i$ be the linear transformation on $\mathfrak{A}_n$ with $D_i(x^{\alpha})=\alpha_i x^{\alpha-\epsilon_i}$ for any $\alpha\in A_n$. Then it is easy to see that $D_i\in\Der(\mathfrak{A}_n)$ for $1\leq i\leq n$. The {\bf Jacobson-Witt algebra} $W_n$ is defined as the derivation algebra of $\mathfrak{A}_n$, i.e., $W_n=\Der(\mathfrak{A}_n)$. Then by \cite[\S\,4.2]{SF}, $W_n$ is a free $\mathfrak{A}_n$-module of rank $n$ with a basis $\{D_1,\cdots, D_n\}$. The Lie bracket in $W_n$ is given by
$$[fD_i, gD_j]=f(D_i(g))D_j-g(D_j(f))D_i,\,\,f,g\in\mathfrak{A}_n, 1\leq i,j\leq n.$$
Moreover, $W_n$ is a simple Lie algebra unless $n=1$ and $p=2$. The natural $\mathbb{Z}$-grading on $\mathfrak{A}_n$ induces the corresponding $\mathbb{Z}$-grading structure on $W_n$,
$$W_n=\bigoplus\limits_{i=-1}^{n(p-1)-1}(W_n)_{[i]},$$
where
$$(W_n)_{[i]}=\text{span}_{\mathbb{F}}\{x^{\alpha}D_j :  |\alpha|=i+1, 1\leq j\leq n\}.$$
Furthermore, $W_n$ has a canonical torus $T=\sum_{i=1}^n\bbf x_iD_i\in (W_n)_{[0]}$, and it has the following root space decomposition with respect to the torus $T$:
\begin{equation}\label{decomp for W}
W_n=T\oplus\big(\bigoplus\limits_{\alpha\in\Lambda}(W_n)_{\alpha}\big),
\end{equation}
where
$$(W_n)_{\alpha}=\text{span}_{\bbf}\{x^{\alpha+\epsilon_j}D_j :  1\leq j\leq n\},$$
and $\Lambda=\{a_1\epsilon_1+\cdots+a_n\epsilon_{n}-\epsilon_i :  0\leq a_1,\cdots, a_n\leq p-1, 1\leq i\leq n\}\setminus\{0\}$ is the set of all roots.

We need the following result on the derivation algebras of the Jacobson-Witt algebras for later use.

\begin{lem}\label{derivation lem}
We have $\Der(W_n)=\Inn(W_n)$ for any $n\in\mathbb{Z}_+$.
\end{lem}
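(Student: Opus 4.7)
The plan is to reduce an arbitrary $D\in\Der(W_n)$ to the zero derivation by subtracting a finite sequence of inner derivations, exploiting the $\mathbb{Z}$-grading $W_n=\bigoplus_{i=-1}^{n(p-1)-1}(W_n)_{[i]}$ together with the root space decomposition \eqref{decomp for W} with respect to the torus $T=\sum_{i=1}^n \bbf x_iD_i$.

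The first step is to use the degree element $E=\sum_{i=1}^n x_iD_i\in T$, which acts as the scalar $i$ on $(W_n)_{[i]}$. From $[E,D(x)]-D([E,x])=[D(E),x]$ and the fact that $[D(E),\,\cdot\,]$ has bounded degree shift, one splits $D=\sum_k D^{(k)}$ into graded components with $D^{(k)}\big((W_n)_{[j]}\big)\subseteq (W_n)_{[j+k]}$, each of which is again a derivation. The second step is to find, for each $k$, an element $z_k\in (W_n)_{[k]}$ such that $\ad z_k$ has the same values as $D^{(k)}$ on the generating set $T\cup\{D_1,\dots,D_n\}$: for $k\ge 0$ this amounts to reading the coefficients of $D^{(k)}(D_i)$ in the basis $\{x^{\alpha}D_j\}$, while for $k=-1$ the constraint $D^{(-1)}$ must map $(W_n)_{[0]}\to (W_n)_{[-1]}=\mathrm{span}\{D_1,\dots,D_n\}$ compatibly with the $\ggg\ggg_n$-action, and a suitable $z_{-1}\in (W_n)_{[-1]}$ exists. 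After subtracting all $\ad z_k$, the remainder $\tilde D$ vanishes on a generating set of $W_n$, and hence is identically zero by the Leibniz rule, so $D=\sum_k\ad z_k\in\Inn(W_n)$.

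The main obstacle is handling small characteristics, since the existence of the elements $z_k$ depends on the non-vanishing of certain coefficients arising from the brackets $[x^{\alpha}D_i,x^{\beta}D_j]=\alpha_j x^{\alpha+\beta-\epsilon_j}D_i-\beta_i x^{\alpha+\beta-\epsilon_i}D_j$ and from the $T$-weights, both of which can collapse when $p=2$. For $p>2$ the required linear systems are the standard ones solved in \cite{SF, St}. For $p=2$, $n=1$, the algebra $W_1$ is the $2$-dimensional non-abelian Lie algebra with basis $\{D_1,x_1D_1\}$ and a direct computation shows $\dim\Der(W_1)=\dim\Inn(W_1)=2$. For $p=2$, $n\ge 2$, the simplicity of $W_n$ allows the same grading scheme to be carried out, with the coefficient collapses checked not to obstruct the inductive determination of the $z_k$.
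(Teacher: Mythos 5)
Your proposal is a plan rather than a proof, and the plan has a structural gap exactly where this lemma needs new work. Your scheme of realizing each graded component $D^{(k)}$ as $\ad z_k$ with $z_k\in (W_n)_{[k]}$ cannot even be set up for $k\le -2$, since $(W_n)_{[k]}=0$ there; for those components one must prove directly that $D^{(k)}=0$, and this is precisely the step that does not follow from the literature when $p=2$. The paper's proof isolates this point: for $p=2$, $n\ge 2$ it shows by explicit bracket computations that a homogeneous derivation of degree $\le -2$ annihilates $(W_n)_{[1]}$, and then concludes using the fact that $W_n$ is generated by $(W_n)_{[-1]}$ and $(W_n)_{[1]}$ (\cite[Lemma 7.1, Chapter 4]{SF}). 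Your proposal never mentions the components of degree $\le -2$, and your only comment on the genuinely problematic case ($p=2$, $n\ge 2$) is that the coefficient collapses are ``checked not to obstruct'' --- no check is performed, and performing it is the entire content of the lemma beyond citing \cite{SF, St}. Likewise, the existence of $z_k$ for $k\ge 0$ is an integrability statement that is not automatic in the truncated algebra (e.g.\ $x_1^{p-1}\notin D_1(\mathfrak{A}_n)$, so prescribing $D^{(k)}(D_i)$ does not obviously ``integrate'' to an element of $(W_n)_{[k]}$), and the existence of $z_{-1}$ amounts to the vanishing of a class in $H^1\bigl((W_n)_{[0]},(W_n)_{[-1]}\bigr)$; both are asserted without argument, and both are exactly the places where small characteristic can intervene.

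Two smaller points. The eigenvalues of $\ad E$ for $E=\sum_{i}x_iD_i$ only detect the degree modulo $p$, so the decomposition $D=\sum_k D^{(k)}$ should be obtained by elementary projection onto the graded components of $W_n$ (which does work and makes each $D^{(k)}$ a derivation), not via $\ad E$-eigenspaces. And $T\cup\{D_1,\dots,D_n\}$ generates only the subalgebra $T\oplus (W_n)_{[-1]}$ of $W_n$, so ``$\tilde D$ vanishes on a generating set, hence is zero'' does not follow as stated; you would need either the transitivity induction on degrees (valid for components of degree $k\ge 0$ once they kill $(W_n)_{[-1]}$) or the generation of $W_n$ by $(W_n)_{[-1]}$ and $(W_n)_{[1]}$ for $n\ge2$. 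Your treatment of $p=2$, $n=1$ by a direct dimension count is correct and agrees with the paper's Case 1, and deferring the case $p>2$ to \cite{SF, St} is what the paper does as well.
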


\begin{proof}
When the base field $\bbf$ is of characteristic $p>2$, the assertion follows from \cite[Theorems 8.5, Chapter 4]{SF} (also see \cite[Theorem 7.1.2]{St}).

In the following we assume that the base field $\bbf$ is of characteristic $p=2$. We will refine the proof of \cite[Theorem 8.5, Chapter 4]{SF} to the case of characteristic $p=2$. In this situation, we divide the discussion into the following two cases.

Case 1: $n=1$

In this case $W_1=\text{span}_{\bbf}\{D_1, x_1D_1\}$, and $[W_1, W_1]=\bbf D_1$. For any $D\in\Der(W_1)$, we can assume $D(D_1)=aD_1, D(x_1D_1)=bx_1D_1+cD_1$ for $a, b,c\in\bbf$. Then
$$aD_1=D(D_1)=D([D_1, x_1D_1])=[D(D_1), x_1D_1]+[D_1, D(x_1D_1)]=(a+b)D_1.$$
This implies that $b=0$, and $D=\ad (cD_1-ax_1D_1)$. Consequently, $\Der(W_1)=\Inn(W_1)$.

Case 2: $n\geq 2$.

In this case, let $\ggg=W_n$, then $\ggg=\sum_{i=-1}^{n-1}\ggg_{[i]}$. To show the assertion, from the same arguments in the proof of \cite[Theorem 8.5, Chapter 4]{SF}, it suffices to prove that any derivation $\varphi$ on $\ggg$ of homogeneous degree $t\leq -2$ is trivial. Indeed, $\varphi(\ggg_{[-1]}\oplus\ggg_{[0]})=0$ by definition. We claim that $\varphi(\ggg_{[1]})=0$. Otherwise, without loss of generality  we may assume that $$\varphi(x_1x_2D_i)=\sum_{j=1}^na_jD_j\neq 0,\,\,\text{ for some }i\in\{1,\cdots, n\}, a_j\in\bbf\text{ with }1\leq j\leq n.$$ We need to consider  the following two subcases which may occur.

Subcase 2.1: $i=1$ or $2$.

Without loss of generality, we may assume that $i=1$. In this subcase, we have
$$\aligned \sum_{j=1}^na_jD_j&=\varphi(x_1x_2D_1)=\varphi([x_2D_2, x_1x_2D_1])\\
&=[x_2D_2, \varphi(x_1x_2D_1)]=[x_2D_2, \sum_{j=1}^na_jD_j]=-a_2D_2.\endaligned$$
It follows that $a_j=0$ for any $j\neq 2$, i.e., $\varphi(x_1x_2D_1)=a_2D_2$. Furthermore,
$$0=\varphi([x_2D_1, x_1x_2D_1])=[x_2D_1, \varphi(x_1x_2D_1)]=[x_2D_1, a_2D_2]=-a_2D_1.$$
It follows that $a_2=0$, i.e., $\varphi(x_1x_2D_1)=0$, a contradiction.

Subcase 2.2:  $i>2$.

In this subcase, we have

$$\aligned-\sum_{j=1}^na_jD_j&=-\varphi(x_1x_2D_i)=\varphi([x_iD_i, x_1x_2D_i])\\
&=[x_iD_i, \varphi(x_1x_2D_i)]=[x_iD_i, \sum_{j=1}^na_jD_j]=-a_iD_i.\endaligned$$
It follows that $\varphi(x_1x_2D_i)=a_iD_i$. Hence,
$$a_iD_i=\varphi(x_1x_2D_i)=\varphi([x_1D_1, x_1x_2D_i])=[x_1D_1, \varphi(x_1x_2D_i)]=[x_1D_1, a_iD_i]=0.$$
This implies that $\varphi(x_1x_2D_i)=0$, a contradiction.

In conclusion, $\varphi(\ggg_{[1]})=0$. Since $\ggg$ is generated by $\ggg_{[-1]}$ and $\ggg_{[1]}$ by \cite[Lemma 7.1, Chapter 4]{SF}, it follows that $\varphi=0$, so that $\Der(W_n)=\Inn(W_n)$.
\end{proof}

By Lemma \ref{derivation lem}, we can reformulate the definition of 2-local derivation on the Jacobson-Witt algebra as follows.
Let $\ggg=W_n$ be the Jacobson-Witt algebra. A map $\Delta$ on $\ggg$ is a 2-local derivation if for any two elements $x, y\in\ggg$, there exists an element $a_{x,y}\in\ggg$ such that $\Delta(x)=[a_{xy}, x]$ and $\Delta(y)=[a_{xy}, y]$.

\section{2-local derivations on the Jacobson-Witt algebras}
Throughout  this section, we   assume that $\ggg=W_n$ is the simple Jacobson-Witt algebra over a field  $\bbf$ of prime characteristic $p$ with cardinality no less than $p^n$, that is, we exclude the case that $\ggg=W_1$ and $p=2$. We shall determine all 2-local derivations on $\ggg$.

In general, for an element $x$ in a Lie algebra $\mathfrak{L}$, the centralizer of $x$ in $\mathfrak{L}$ is defined as $\mathfrak{z}_{\mathfrak{L}}(x)=\{y\in\mathfrak{L} :  [x,y]=0\}.$ Then $\mathfrak{z}_{\mathfrak{L}}(x)$ is a subalgebra of $\mathfrak{L}$ containing $x$ itself. For $\lambda=(\lambda_1, \cdots, \lambda_n),
\mu=(\mu_1,\cdots, \mu_n)\in\bbf^n$, let $(\lambda, \mu)=\sum_{i=1}^n\lambda_i\mu_i$.

\begin{defn}
A vector $\lambda=(\lambda_1, \cdots, \lambda_n)\in\bbf^n$ is called {\bf regular} if $\lambda_1,\cdots, \lambda_n$ are $\bbf_p$ linearly independent, that is, for $\mu\in\bbf_p^n$, $(\lambda,\mu)=0$ if and only if $\mu=(0,\cdots, 0)$.
\end{defn}

\begin{rem}
The assumption on the cardinality of the base field $\bbf$ assures the existence of regular vectors in $\bbf^n$.
\end{rem}


For $\lambda=(\lambda_1,\cdots, \lambda_n)\in\bbf^n$ and $0\le k\le p-1$,  let
$$\frakd^{(k)}_{\lambda}=\sum_{i=1}^n\lambda_ix_i^kD_i.$$ 
For $1\leq i\leq n$, let
\begin{equation}\label{Di'}
\mathscr{D}_i=D_i+\sum\limits_{j=i}^{n-1}(\prod\limits_{k=i}^jx_k^{p-1})D_{j+1},
\end{equation}

We need the following lemma for later use.

\begin{lem}\label{lem for D_i}
Keep notations as above, then $\mathscr{D}_i=(-1)^{i-1}\mathscr{D}_1^{p^{i-1}}$ for $1\leq i\leq n$.
\end{lem}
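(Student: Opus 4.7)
The plan is to induct on $i$, reducing the lemma to the sharper identity $\mathscr{D}_i^p = -\mathscr{D}_{i+1}$ for $1 \le i \le n-1$. Granted this identity, the lemma follows easily: if $\mathscr{D}_i = (-1)^{i-1} \mathscr{D}_1^{p^{i-1}}$, then $\mathscr{D}_i^p = ((-1)^{i-1})^p \mathscr{D}_1^{p^i} = (-1)^{i-1} \mathscr{D}_1^{p^i}$ in characteristic $p$, whence $\mathscr{D}_{i+1} = -\mathscr{D}_i^p = (-1)^i \mathscr{D}_1^{p^i}$, as required. Moreover, since $\mathscr{D}_i$ kills $x_1,\dots,x_{i-1}$ and preserves the truncated subalgebra $\bbf[x_i,\dots,x_n]/(x_i^p,\dots,x_n^p)$, on which it has exactly the same structural form as $\mathscr{D}_1$ does on $\mathfrak{A}_{n-i+1}$ under the relabeling $x_{i+s-1}\leftrightarrow x_s$, the identity $\mathscr{D}_i^p = -\mathscr{D}_{i+1}$ reduces to its $i=1$ instance in the smaller algebra. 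Thus the entire lemma collapses to proving the single identity $\mathscr{D}_1^p = -\mathscr{D}_2$ in $W_n$.

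Because derivations of $\mathfrak{A}_n$ are determined by their values on the generators $x_1,\dots,x_n$, I would verify this identity pointwise on each $x_l$. The key observation that tames the iteration is that on any monomial $x^\alpha$ with $\alpha_1\ge 1$, each corrective summand $\bigl(\prod_{k=1}^j x_k^{p-1}\bigr) D_{j+1}(x^\alpha)$ produces a monomial whose $x_1$-exponent is $\ge p$ and therefore vanishes in $\mathfrak{A}_n$; consequently $\mathscr{D}_1(x^\alpha)=\alpha_1 x^{\alpha-\epsilon_1}$ whenever $\alpha_1\ge 1$. For $l=1$ we have $\mathscr{D}_1(x_1)=1$ and hence $\mathscr{D}_1^p(x_1)=0=-\mathscr{D}_2(x_1)$. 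For $l\ge 2$, starting from $\mathscr{D}_1(x_l)=\prod_{k=1}^{l-1} x_k^{p-1}$ and iterating under the simplification above, one obtains by induction on $k$
\begin{equation*}
\mathscr{D}_1^k(x_l)=(p-1)(p-2)\cdots(p-k+1)\, x_1^{p-k}\prod_{k'=2}^{l-1}x_{k'}^{p-1},\qquad 1\le k\le p,
\end{equation*}
and at $k=p$ the coefficient is $(p-1)!\equiv -1\pmod p$ by Wilson's theorem while the $x_1$-factor becomes $x_1^0=1$, so $\mathscr{D}_1^p(x_l)=-\prod_{k=2}^{l-1} x_k^{p-1}=-\mathscr{D}_2(x_l)$.

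The main obstacle is not a single subtle step but careful bookkeeping: one must ensure throughout the iteration that the corrective summands in $\mathscr{D}_1$ really do vanish on every intermediate monomial, and correctly handle the boundary step where the $x_1$-exponent drops from $1$ to $0$ (here the corrective terms could in principle activate but are killed by the degree restriction of $\mathfrak{A}_n$, leaving only the bare $D_1$-contribution, which then produces the expected $-\mathscr{D}_2$ image). In short, the entire result ultimately rests on the clean dichotomy that $\mathscr{D}_1$ acts as $D_1$ on any monomial whose $x_1$-exponent is positive, combined with Wilson's theorem $(p-1)!\equiv -1\pmod p$ to fix the sign.
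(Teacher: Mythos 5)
Your proof is correct, and it takes a genuinely more self-contained route than the paper's. The paper disposes of the case $p>2$ by citing \cite{Pr} (Lemma 3(i) there) and only computes directly when $p=2$, where the identity to be checked degenerates to $\mathscr{D}_i^2=\mathscr{D}_{i+1}$ and is verified by evaluating $\mathscr{D}_i$ and $\mathscr{D}_i^2$ on each generator $x_j$. You prove the key identity $\mathscr{D}_i^p=-\mathscr{D}_{i+1}$ uniformly in $p$ by the same generator-evaluation strategy, and the two extra ingredients you supply --- the observation that every corrective summand $(\prod_{k=1}^{j}x_k^{p-1})D_{j+1}$ of $\mathscr{D}_1$ kills any monomial with positive $x_1$-exponent, so that $\mathscr{D}_1$ acts as the bare $D_1$ throughout the iteration, and Wilson's theorem to identify the accumulated coefficient $(p-1)!$ with $-1$ --- are exactly what is needed to replace the citation. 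Your reduction from general $i$ to $i=1$ via the relabeling $x_{i+s-1}\leftrightarrow x_s$ is sound, since $\mathscr{D}_i$ annihilates $x_1,\dots,x_{i-1}$ and preserves the subalgebra generated by $x_i,\dots,x_n$; the paper instead runs the $p=2$ computation for each $i$ directly, which amounts to the same thing. The sign bookkeeping $((-1)^{i-1})^p=(-1)^{i-1}$ in characteristic $p$ is also right. The one step you use silently but should state is that $\mathscr{D}_1^{p^{i-1}}$ and $\mathscr{D}_i^p$ are again derivations of $\mathfrak{A}_n$ (the $p$-th power of a derivation is a derivation in characteristic $p$), which is what licenses checking the identity only on the generators $x_l$; the paper makes the analogous remark for $p=2$. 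What your route buys is independence from \cite{Pr} for $p>2$; what it costs is tracking the iteration $\mathscr{D}_1^k(x_l)$ through $p$ steps, which you do correctly, including the boundary step where the $x_1$-exponent drops to zero.
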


\begin{proof}
When the base field $\bbf$ is of characteristic $p>2$, the assertion follows from \cite[Lemma 3(i)]{Pr}.

In the following, we assume that $\bbf$ has characteristic $p=2$. Since $\mathscr{D}_1\in\Der(\mathfrak{A}_n)=\ggg$, it follows that $\mathscr{D}_1^{2^{i-1}}\in\Der(\mathfrak{A}_n)=\ggg$ for $1\leq i\leq n$. It is clear that $\mathscr{D}_n^2={D}_n^2=0$.
To prove the assertion in the lemma, we only need to show that $\mathscr{D}_i^2=\mathscr{D}_{i+1}$ for $i=1,2,\cdots, n-1$.

Direct computations imply that
  $$\mathscr{D}_i(x_j)=\left\{\begin{matrix}
\delta_{i,j},&\text{  if }j\le i, \\
\prod_{k=i}^{j-1}x_k,& \quad\quad\text{ if }  i< j\leq n.
\end{matrix}\right.$$
Furthermore, $$\mathscr{D}_i^2(x_j)=\left\{\begin{matrix}
\delta_{i+1,j},&\text{  if }j\le i+1, \\
\prod_{k=i+1}^{j-1}x_k,& \quad\quad\text{ if }  i+1< j\leq n.
\end{matrix}\right.$$
Consequently, $\mathscr{D}_i^2=\mathscr{D}_{i+1}$ for $i=1,2,\cdots, n-1$. We complete the proof.
\end{proof}

The following result on the structure of centralizers of some special elements in $\ggg$ is crucial to determine 2-local derivations on $\ggg$.

\begin{lem}\label{lem on centralizer}
Let $\lambda=(\lambda_1,\cdots, \lambda_n)\in\bbf^n$ be regular. 
Then the following statements hold.
\begin{itemize}
\item[(1)] $\mathfrak{z}_{\ggg}(\frakd^{(1)}_{\lambda})=T$.\vspace{1mm}
\item[(2)] If $p>2$, then $\mathfrak{z}_{\ggg}(\sum_{i=1}^n x_i^2D_i)\cap T=0$.\vspace{1mm}
\item[(3)] $\mathfrak{z}_{\ggg}(\mathscr{D}_1)=\sum\limits_{i=1}^{n}\bbf \mathscr{D}_i$.
\end{itemize}
\end{lem}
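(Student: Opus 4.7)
For parts (1) and (2), the proofs reduce to direct bracket computations in $W_n$. In (1), the formula $[\frakd^{(1)}_\lambda, x^\alpha D_j] = (\lambda, \alpha - \epsilon_j)\, x^\alpha D_j$ shows that $x^\alpha D_j$ lies in the centralizer if and only if $(\lambda, \alpha - \epsilon_j) = 0$; since each entry of $\alpha - \epsilon_j$ belongs to $\bbf_p$ and $\lambda$ is regular, this forces $\alpha = \epsilon_j$, so the centralizer has basis $\{x_j D_j : 1 \leq j \leq n\}$, which is exactly $T$. For (2), an analogous computation yields $[x_j D_j, x_i^2 D_i] = \delta_{ij}\, x_i^2 D_i$, so for any nonzero $t = \sum_j \mu_j x_j D_j \in T$ one has $[t, \sum_i x_i^2 D_i] = \sum_j \mu_j x_j^2 D_j$, which is nonzero because the $x_j^2 D_j$ are linearly independent when $p > 2$.

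Part (3) is the main content. The inclusion $\supseteq$ is immediate from Lemma \ref{lem for D_i}: each $\mathscr{D}_i = (-1)^{i-1} \mathscr{D}_1^{p^{i-1}}$ is a polynomial in $\mathscr{D}_1$ as an operator on $\mathfrak{A}_n$ and hence commutes with $\mathscr{D}_1$ in $\ggg$. For the reverse inclusion, the plan proceeds in two stages. The first stage is to show that $\mathscr{D}_1$ has minimal polynomial $T^{p^n}$ on $\mathfrak{A}_n$, so that $\mathfrak{A}_n$ is a cyclic $\bbf[\mathscr{D}_1]$-module and the centralizer of $\mathscr{D}_1$ in $\text{End}(\mathfrak{A}_n)$ coincides with the $p^n$-dimensional commutative algebra $\bbf[\mathscr{D}_1]$. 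A natural cyclic generator is $v = x_1^{p-1}\cdots x_n^{p-1}$, and cyclicity can be verified by induction on $n$ using the decomposition $\mathscr{D}_1 = D_1 + x_1^{p-1}\widetilde{\mathscr{D}}$, where $\widetilde{\mathscr{D}}$ is the analogous operator acting in $x_2, \ldots, x_n$. Consequently, every $X \in \mathfrak{z}_\ggg(\mathscr{D}_1)$ admits a unique expression $X = \sum_{i=0}^{p^n - 1} c_i \mathscr{D}_1^i$ as an element of $\text{End}(\mathfrak{A}_n)$.

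The second stage is to determine which such $X$ are derivations of $\mathfrak{A}_n$, via an iterated Leibniz argument. First, $X(1) = 0$ forces $c_0 = 0$. Next, expanding the defect $X(fg) - X(f)g - fX(g)$ using the Leibniz expansion of each $\mathscr{D}_1^i$ and substituting $f = x_1$ (so $\mathscr{D}_1(x_1) = 1$ and $\mathscr{D}_1^k(x_1) = 0$ for $k \geq 2$) reduces the condition to $\sum_{i \geq 2} i c_i \mathscr{D}_1^{i-1} = 0$ as an operator; the minimality of $T^{p^n}$ then forces $c_i = 0$ whenever $i \geq 2$ and $p \nmid i$. The residue $X - c_1 \mathscr{D}_1$ is again a derivation, now lying in $\bbf[\mathscr{D}_1^p]$; since $\mathscr{D}_1^p = -\mathscr{D}_2$ is itself a derivation of $\mathfrak{A}_n$ with minimal polynomial $T^{p^{n-1}}$ (its Jordan type is $p$ blocks of size $p^{n-1}$), repeating the argument with $f = x_2$ (using $\mathscr{D}_2(x_2) = 1$ and $\mathscr{D}_2^k(x_2) = 0$ for $k \geq 2$) forces $c_{pm} = 0$ for $m \geq 2$ with $p \nmid m$. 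Iterating this procedure $n$ times with $f = x_1, x_2, \ldots, x_n$ and the derivations $\mathscr{D}_1^{p^k} = (-1)^k \mathscr{D}_{k+1}$ at level $k$, the only possibly nonzero coefficients are $c_{p^k}$ for $0 \leq k \leq n-1$; hence $X \in \sum_k \bbf \mathscr{D}_1^{p^k} = \sum_i \bbf \mathscr{D}_i$, as required.

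The main obstacle is the first stage, namely establishing that $\mathscr{D}_1$ acts cyclically on $\mathfrak{A}_n$; once cyclicity is in hand, the iterated Leibniz/minimal-polynomial manipulations in the second stage proceed mechanically.
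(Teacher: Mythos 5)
Your proofs of (1) and (2) are correct and essentially the same as the paper's: the identity $[\frakd^{(1)}_\lambda, x^\alpha D_j]=(\lambda,\alpha-\epsilon_j)\,x^\alpha D_j$ is just the root space decomposition (\ref{decomp for W}) made explicit, and your computation for (2) is the paper's verbatim. For (3), however, you take a genuinely different route. The paper splits by characteristic: for $p>2$ it simply cites \cite[Lemma 7(ii)]{Pr}, and for $p=2$ it gives a short graded argument --- after subtracting an element of $\sum_i\bbf\mathscr{D}_i$ one may assume the lowest homogeneous component $X_r$ of $X$ has degree $r\ge 0$, and then the lowest-degree component of $[X,\mathscr{D}_i]=0$ forces $[X_r,D_i]=0$ for all $i$, impossible for a nonzero element of nonnegative degree. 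Your argument is uniform in $p$ and self-contained: you identify $\mathfrak{z}_{\ggg}(\mathscr{D}_1)$ as $\Der(\mathfrak{A}_n)\cap\bbf[\mathscr{D}_1]$ using that $\mathscr{D}_1$ is a single Jordan block on $\mathfrak{A}_n$, and then sieve out the derivations inside $\bbf[\mathscr{D}_1]$ by the Leibniz/binomial filtration. Both stages check out; for the cyclicity (which you rightly flag as the main obstacle) you can bypass the induction you sketch and use Lemma \ref{lem for D_i} directly: $\mathscr{D}_1^{p^n}=\pm\mathscr{D}_n^{\,p}=\pm D_n^{\,p}=0$, while $\mathscr{D}_1^{p^n-1}=\pm\prod_{k=1}^n\mathscr{D}_k^{\,p-1}$ sends $x^\tau$ to $\pm((p-1)!)^n\ne 0$, so the minimal polynomial has degree $p^n=\dim\mathfrak{A}_n$ and $\mathscr{D}_1$ is cyclic. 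In the second stage the substitution $f=x_k$ at level $k$ is legitimate since $\mathscr{D}_k(x_k)=1$ and $\mathscr{D}_k^{\,m}(x_k)=0$ for $m\ge 2$, and the linear independence of $1,\mathscr{D}_k,\dots,\mathscr{D}_k^{\,p^{n-k+1}-1}$ needed at each level follows from the same minimal-polynomial fact. What your route buys is independence from \cite{Pr} and a single argument valid for all $p$ (in effect reproving Premet's lemma), at the cost of length; what the paper's route buys is brevity --- and it is worth noting that its $p=2$ lowest-degree argument actually works verbatim for every $p$, since the non-$D_i$ terms of $\mathscr{D}_i$ all have degree at least $p-2\ge 0$.
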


\begin{proof}
(1) Take any $D\in \mathfrak{z}_{\ggg}(\frakd^{(1)}_{\lambda})$. Thanks to (\ref{decomp for W}), we can write $D=D_0+\sum\limits_{\alpha\in\Lambda}D_{\alpha}$
with $D_0\in T$ and $D_{\alpha}\in\ggg_{\alpha}$ for any $\alpha\in\Lambda$. Then
$$0=[\frakd^{(1)}_{\lambda}, D]=\Big[\frakd^{(1)}_{\lambda}, D_0+\sum\limits_{\alpha\in\Lambda}D_{\alpha}\Big]=\sum\limits_{\alpha\in\Lambda}(\lambda,\alpha) D_{\alpha}.$$
Since $\lambda$ is regular, $(\lambda,\alpha)\neq 0$ for any $\alpha\in\Lambda$. It follows that $D_{\alpha}=0$ for any $\alpha\in\Lambda$. This implies that
$\mathfrak{z}_{\ggg}(\frakd^{(1)}_{\lambda})\subseteq T$. On the other hand, it is obvious that $T\subseteq \mathfrak{z}_{\ggg}(\frakd^{(1)}_{\lambda})$. Hence,
$\mathfrak{z}_{\ggg}(\frakd^{(1)}_{\lambda})=T$.


(2) Take any $E=\sum\limits_{j=1}^nk_jx_jD_j\in\mathfrak{z}_{\ggg}(\sum_{i=1}^n x_i^2D_i)\cap T$ with $k_j\in\bbf$ for $1\leq j\leq n$, then
$$0=\Big[\sum\limits_{j=1}^nk_jx_jD_j,\sum_{i=1}^n x_i^2D_i\Big]=\sum\limits_{i=1}^n k_i x_i^2D_i.$$
It follows that $k_i=0$ for any $1\leq i\leq n$. Consequently, $\mathfrak{z}_{\ggg}(\sum_{i=1}^n x_i^2D_i)\cap T=0$.

(3) When $p>2$, this follows from     \cite[Lemma 7(ii)]{Pr}. Now we prove this for $p=2$. It is obvious that $\sum_{i=1}^{n}\bbf \mathscr{D}_i\subset\mathfrak{z}_{\ggg}(\mathscr{D}_1)$ by Lemma \ref{lem for D_i}. On the other hand, for any $X\in\mathfrak{z}_{\ggg}(\mathscr{D}_1)$, we write $X=\sum_{i\ge r}X_{i}\in\ggg$, where $X_{i}\in\ggg_{[i]}$ for $i\geq r$.
Then we may assume that $r\ge 0$. If $X\ne0$, we may assume that $X_r\ne0$. Since  $[X, \mathscr{D}_i]=0$ for any $1\leq i\leq n$ by Lemma \ref{lem for D_i}, we see that
$[X_r,  {D}_i]=0$ for any $1\leq i\leq n$, which is impossible. Hence, the assertion follows.
\end{proof}

\begin{lem}\label{delta lem}
Let $\lambda=(\lambda_1,\cdots,\lambda_n)\in\bbf^n$ be regular, and $\Delta$ be a 2-local derivation on $\ggg$ such that $\Delta(\frakd^{(1)}_{\lambda})=0$. Then for any nonzero element $X= \sum_{\alpha\in S} \sum_{i\in\Gamma_{\alpha}}c_{\alpha, i}x^{\alpha}D_i\in W_n$, where $S\subseteq A_n$ and $\Gamma_{\alpha}\subset\{1,2,\cdots, n\}$ and $c_{\alpha, i}\in\bbf^*$ for $\alpha\in S, i\in\Gamma_{\alpha}$, we have $$\Delta(X) \in \sum_{\alpha\in S} \sum_{i\in\Gamma_{\alpha}}\bbf x^{\alpha}D_i.$$
\end{lem}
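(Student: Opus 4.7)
The plan is to unpack the 2-local derivation hypothesis at the specific pair $(\frakd^{(1)}_\lambda, X)$ and then exploit the very rigid structure of the centralizer $\mathfrak{z}_\ggg(\frakd^{(1)}_\lambda) = T$ furnished by Lemma~\ref{lem on centralizer}(1). By the reformulation of 2-local derivation in terms of inner derivations (which holds because $\Der(\ggg) = \Inn(\ggg)$ by Lemma~\ref{derivation lem}), there exists $a \in \ggg$, depending on $\frakd^{(1)}_\lambda$ and $X$, such that
\[
\Delta(\frakd^{(1)}_\lambda) = [a, \frakd^{(1)}_\lambda] \quad \text{and} \quad \Delta(X) = [a, X].
\]
The assumption $\Delta(\frakd^{(1)}_\lambda) = 0$ means $a \in \mathfrak{z}_\ggg(\frakd^{(1)}_\lambda)$, and Lemma~\ref{lem on centralizer}(1) forces $a \in T$.

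Next I would carry out the routine bracket computation on the standard homogeneous basis. Writing $a = \sum_{j=1}^n \mu_j x_j D_j$ with $\mu_j \in \bbf$, a direct application of the bracket formula gives, for any $\alpha \in A_n$ and $1 \le i \le n$,
\[
[a, x^\alpha D_i] = \Bigl(\sum_{j=1}^n \mu_j \alpha_j - \mu_i\Bigr) x^\alpha D_i,
\]
so $T$ acts diagonally on each basis vector $x^\alpha D_i$. Consequently,
\[
\Delta(X) = [a, X] = \sum_{\alpha \in S} \sum_{i \in \Gamma_\alpha} c_{\alpha, i}\Bigl(\sum_{j=1}^n \mu_j \alpha_j - \mu_i\Bigr) x^\alpha D_i,
\]
which lies in $\sum_{\alpha \in S}\sum_{i \in \Gamma_\alpha} \bbf\, x^\alpha D_i$, exactly as claimed.

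There is no real obstacle here: the lemma is essentially a corollary of Lemma~\ref{lem on centralizer}(1), and the only thing being used beyond that is the standard fact that elements of the torus $T$ are simultaneous eigenvectors for the adjoint action on the monomial basis. The substance of the argument has already been absorbed into the identification $\mathfrak{z}_\ggg(\frakd^{(1)}_\lambda) = T$, which is where the regularity of $\lambda$ (and the mild cardinality hypothesis on $\bbf$) genuinely enters the picture.
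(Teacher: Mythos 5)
Your proof is correct and follows exactly the paper's argument: apply the 2-local hypothesis to the pair $(\frakd^{(1)}_\lambda, X)$, conclude $a\in T$ from Lemma~\ref{lem on centralizer}(1), and use that $T$ acts diagonally on the monomial basis. The explicit eigenvalue computation $[a,x^\alpha D_i]=\bigl(\sum_j\mu_j\alpha_j-\mu_i\bigr)x^\alpha D_i$ is a correct elaboration of what the paper leaves implicit.
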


\begin{proof}
 For $\frakd^{(1)}_{\lambda}$ and $X$, there exists an element $a\in\ggg$ such that $\Delta(\frakd^{(1)}_{\lambda})=[a, \frakd^{(1)}_{\lambda}]$ and
$\Delta(X)=[a, X]$. Since  $\Delta(\frakd^{(1)}_{\lambda})=0$, it follows that $a\in T$ by Lemma \ref{lem on centralizer}. Thus
$$\Delta(X)=\bigg[a, \sum_{\alpha\in S} \sum_{i\in\Gamma_{\alpha}}c_{\alpha, i}x^{\alpha}D_i\bigg]\in\sum_{\alpha\in S} \sum_{i\in\Gamma_{\alpha}}\bbf x^{\alpha}D_i.$$
\end{proof}


\begin{prop}\label{Try1}
Suppose the base field $\bbf$ is of characteristic $p>2$. Let $\nu=(1,1,\cdots, 1)\in\bbf^n$, and $X=\sum_{i\ge -1}X_{i}\in\ggg$, where $X_{i}\in\ggg_{[i]}$ for $i\geq -1$. Then the following statements hold.
\begin{itemize}
\item[(1)] If $X\in \mathfrak{z}_{\ggg}(\sum\limits_{i=1}^nx_i^2D_i)$,  then  $X_{-1}=X_{0}=0$.

\item[(2)]  If $X\in \mathfrak{z}_{\ggg}(\frakd_\nu^{(\frac{p+1}{2})})$, then $X_{k}=0$ for all $k< \frac{p-1}2$.
\end{itemize}
\end{prop}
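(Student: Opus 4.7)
Both elements of interest have the common form $H_m:=\sum_{l=1}^n x_l^{m+1}D_l$, with $m=1$ for part (1) and $m=(p-1)/2$ for part (2); this lies in the homogeneous component $\ggg_{[m]}$. Since $\ad H_m$ raises the $\bbz$-grading by $m$, any $X=\sum_{i\ge-1}X_i$ (with $X_i\in\ggg_{[i]}$) commuting with $H_m$ satisfies $[H_m,X_i]=0$ for each $i$ individually. Both parts therefore reduce to the single claim: for $-1\le k<m$, the only $X_k\in\ggg_{[k]}$ with $[H_m,X_k]=0$ is $X_k=0$.

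To establish this, I would write $X_k=\sum_{j=1}^n P_j D_j$ with each $P_j\in\mathfrak{A}_n$ homogeneous of degree $k+1$, and expand the bracket using $[fD_i,gD_j]=fD_i(g)D_j-gD_j(f)D_i$ to obtain
\[
[H_m,X_k]=\sum_{j=1}^n\Big(\sum_{l=1}^n x_l^{m+1}\,\partial_l P_j-(m+1)\,x_j^m P_j\Big)D_j.
\]
Thus $[H_m,X_k]=0$ is equivalent to the $n$ polynomial identities $\sum_l x_l^{m+1}\partial_l P_j=(m+1)x_j^m P_j$ for $j=1,\ldots,n$. The core computation is to fix $j$ and, for each monomial $x^\alpha$ appearing in $P_j$ (so $|\alpha|=k+1\le m$), compare the coefficients of $x^{\alpha+m\epsilon_j}$ on both sides. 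On the right this coefficient equals $(m+1)p_\alpha$, where $p_\alpha$ denotes the coefficient of $x^\alpha$ in $P_j$. On the left, the term with $l=j$ contributes $\alpha_j p_\alpha$; any term with $l\ne j$ requires $\alpha_l\ge m$, which because $|\alpha|\le m$ forces $\alpha_l=m$ and $\alpha=m\epsilon_l$, but is then multiplied by the vanishing factor $(\alpha_l-m)$. The identity therefore collapses to $(\alpha_j-(m+1))\,p_\alpha=0$.

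It remains only to check that the scalar $\alpha_j-(m+1)$ is a unit in $\bbf$. From $\alpha_j\in\{0,1,\ldots,m\}$ we have $\alpha_j-(m+1)\in\{-(m+1),\ldots,-1\}$, whose reductions modulo $p=2m+1$ lie in $\{m,m+1,\ldots,2m\}$, all nonzero since $m\ge 1$. Hence $p_\alpha=0$ for every $\alpha$, each $P_j$ vanishes, and $X_k=0$. The main subtlety is the boundary case $\alpha=m\epsilon_l$ with $l\ne j$, which appears only for part (2) at $k=m-1$: the comparison against $x^{\alpha+m\epsilon_j}$ a priori receives contributions from several indices $l$, but every such extra contribution carries the prefactor $(\alpha_l-m)=0$, so one still recovers a single clean equation for $p_\alpha$ and the argument goes through uniformly.
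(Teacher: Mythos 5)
Your proof is correct, and at the top level it follows the same strategy as the paper: use the $\bbz$-grading to reduce $[H_m,X]=0$ to $[H_m,X_k]=0$ for each $k$, then translate the latter into polynomial identities for the coefficient functions of $X_k$. The differences are in the execution. The paper treats part (1) by a separate ad hoc argument: a direct computation kills $X_{-1}$, and for $X_0$ it first shows the off-diagonal terms $a_{st}x_sx_tD_t$ cannot cancel, forcing $X_0\in T$, and then invokes Lemma \ref{lem on centralizer}(2); for part (2) it writes down the identity $\frac{p+1}{2}x_i^{\frac{p-1}{2}}f_i^{(k)}-\sum_j x_j^{\frac{p+1}{2}}D_j(f_i^{(k)})=0$ and simply asserts that $f_i^{(k)}=0$ follows. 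Your monomial-by-monomial comparison of the coefficient of $x^{\alpha+m\epsilon_j}$, yielding $(\alpha_j-(m+1))p_\alpha=0$, is exactly the missing verification for part (2), and it has the added benefit of handling part (1) uniformly (your careful treatment of the boundary case $\alpha=m\epsilon_l$, $l\ne j$, is precisely what makes the uniform statement work at $k=m-1$). One small slip: in part (1) you have $m=1$ but $p$ is an arbitrary prime $>2$, so the phrase ``modulo $p=2m+1$'' is only literally valid for part (2); the correct uniform justification is that $m+1-\alpha_j\in\{1,\dots,m+1\}$ with $m+1<p$ in both cases, hence is a unit in $\bbf$. This is cosmetic and does not affect the argument.
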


\begin{proof}
(1) Since  $\sum\limits_{i=1}^nx_i^2D_i\in\ggg_{[1]}$ and
\begin{equation}\label{eq1}
0=\bigg[X, \sum\limits_{i=1}^nx_i^2D_i\bigg]=\sum\limits_{k=-1}^{n(p-1)-1}\bigg[X_{k}, \sum\limits_{i=1}^nx_i^2D_i\bigg],
\end{equation}
it follows that
\begin{equation}\label{eq2}
\bigg[X_{k}, \sum\limits_{i=1}^nx_i^2D_i\bigg]=0, \,\,-1\leq k\leq n(p-1)-1.
\end{equation}

Write  $X_{-1}=\sum\limits_{s=1}^n a_sD_s$ for $a_s\in\bbf, 1\leq s\leq n$. Then
$$0=\bigg[X_{-1}, \sum\limits_{i=1}^nx_i^2D_i\bigg]=\sum\limits_{i=1}^n 2a_ix_iD_i,$$
which implies that $a_s=0$, $-1\leq s\leq n$. Hence, $X_{-1}=0$.

Write
$$X_{0}=\sum\limits_{1\leq i,j\leq n}a_{ij}x_iD_j, a_{ij}\in\bbf, 1\leq i, j\leq n.$$
If there exists some $s\neq t$ such that $a_{st}\neq 0$, then $2a_{st}x_sx_tD_t$ appears as a summand in
$[X_{0}, \sum_{i=1}^nx_i^2D_i]$, and can not be cancelled by other summands. This contradicts with (\ref{eq2}) in the case $k=0$. Hence, $X_{0}\in T$. Then it follows from (\ref{eq2}) in the case $k=0$ and Lemma \ref{lem on centralizer}(2) that $X_{0}=0$.

(2) Since  $\frakd_\nu^{(\frac{p+1}{2})}\in\ggg_{[\frac{p-1}{2}]}$ and
\begin{equation}
0=\Big[X, \frakd_\nu^{(\frac{p+1}{2})}\Big]=\sum\limits_{k=-1}^{n(p-1)-1}\bigg[X_{k}, \frakd_\nu^{(\frac{p+1}{2})}\bigg],
\end{equation}
it follows that
\begin{equation}
\Big[X_{k}, \frakd_\nu^{(\frac{p+1}{2})}\Big]=0, \,\,-1\leq k\leq n(p-1)-1.
\end{equation}

Assume $X_{k}=\sum\limits_{i=1}^n f_i^{(k)}D_i$, where $f_i^{(k)}\in (\mathfrak{A}_n)_{[k+1]}, 1\leq i\leq n$. Then
$$0=\Big[X_{k}, \frakd_\nu^{(\frac{p+1}{2})}\Big]=\sum\limits_{i=1}^n\Big({(\frac{p+1}{2})}x_i^{\frac{p-1}{2}}f_i^{(k)}-
\big(\sum\limits_{j=1}^nx_j^{\frac{p+1}{2}}D_j(f_i^{(k)})\big)\Big)D_i.$$
Hence,
$${(\frac{p+1}{2})}x_i^{\frac{p-1}{2}}f_i^{(k)}-\big(\sum\limits_{j=1}^nx_j^{\frac{p+1}{2}}D_j(f_i^{(k)})\big)=0,\,\forall\,1\leq i\leq n.$$
It follows that
$$f_i^{(k)}=0\,\text{ for\, any}\,-1\leq k<\frac{p-1}{2}, 1\leq i\leq n.$$
That is
$$X_{k}=0\,\text{ for}\,-1\leq k<\frac{p-1}{2}.$$
\end{proof}

As a direct consequence of Proposition \ref{Try1}, we have
\begin{cor}\label{Try2}
Suppose the base field $\bbf$ is of characteristic $p>2$ and $\Delta$ is a 2-local derivation on $\ggg$ such that $\Delta(\frakd^{(1)}_{\lambda})=\Delta(\sum_{i=1}^nx_i^2D_i)=0$ for some regular
vector $\lambda\in\bbf^n$. Then
\begin{itemize}
\item[(1)] $\Delta(\ggg_{[k]})=0$ for any $k$.
\item[(2)] $\Delta(\sum_{k\ge r}\ggg_{[k]})\subseteq \sum_{k\ge r+\frac{p-1}2}\ggg_{[k]}$ for any $r$.
\item[(3)] $\Delta(\mathscr{D}_i)=0$ for $1\leq i\leq n$.
\end{itemize}
\end{cor}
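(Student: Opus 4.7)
The plan is to treat the three parts in the natural order, exploiting the two vanishing hypotheses together with Lemma \ref{lem on centralizer} and Proposition \ref{Try1}. Throughout, for each pair $(X,Y)$ I produce an element $a \in \ggg$ witnessing both $\Delta(X)$ and $\Delta(Y)$, and then squeeze $\Delta(X)$ between the constraints coming from two different choices of $Y$.

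For (1), take a homogeneous $X \in \ggg_{[k]}$. Pairing $X$ with $\frakd^{(1)}_{\lambda}$ gives $\Delta(X) = [a, X]$ with $a \in \mathfrak{z}_{\ggg}(\frakd^{(1)}_{\lambda}) = T = \ggg_{[0]}$ by Lemma \ref{lem on centralizer}(1), so $\Delta(X) \in \ggg_{[k]}$. Pairing $X$ with $\sum_{i=1}^{n} x_i^2 D_i$ gives $\Delta(X) = [b, X]$ with $b \in \mathfrak{z}_{\ggg}(\sum_{i} x_i^2 D_i)$, and Proposition \ref{Try1}(1) tells me $b$ has trivial $\ggg_{[-1]}\oplus\ggg_{[0]}$ component, so $[b,X] \in \sum_{j \geq k+1} \ggg_{[j]}$. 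The two constraints intersect only in $0$.

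For (2), item (1) now immediately yields $\Delta(\frakd^{(\frac{p+1}{2})}_{\nu}) = 0$, since $\frakd^{(\frac{p+1}{2})}_{\nu}$ lies in the single homogeneous piece $\ggg_{[\frac{p-1}{2}]}$. For any $X \in \sum_{k \geq r} \ggg_{[k]}$, pairing $X$ with $\frakd^{(\frac{p+1}{2})}_{\nu}$ produces $\Delta(X) = [c, X]$ with $c \in \mathfrak{z}_{\ggg}(\frakd^{(\frac{p+1}{2})}_{\nu})$, and Proposition \ref{Try1}(2) localizes $c$ to $\sum_{j \geq \frac{p-1}{2}} \ggg_{[j]}$. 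The bracket $[c,X]$ then lies in $\sum_{k \geq r + \frac{p-1}{2}} \ggg_{[k]}$, as claimed.

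Part (3) is the main obstacle, because $\mathscr{D}_i$ is not homogeneous and (2) alone only removes the $\ggg_{[-1]}$-component. My strategy is to settle $\mathscr{D}_1$ first and then bootstrap. Pairing $\mathscr{D}_1$ with $\frakd^{(1)}_{\lambda}$ produces $\Delta(\mathscr{D}_1) = [a,\mathscr{D}_1]$ with $a \in T$; pairing $\mathscr{D}_1$ with $\sum_i x_i^2 D_i$ produces $\Delta(\mathscr{D}_1) = [b, \mathscr{D}_1]$ with $b \in \sum_{j \geq 1} \ggg_{[j]}$. Hence $a - b \in \mathfrak{z}_{\ggg}(\mathscr{D}_1) = \sum_{j=1}^{n} \bbf \mathscr{D}_j$ by Lemma \ref{lem on centralizer}(3). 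Expanding $a - b = \sum_{j} c_j \mathscr{D}_j$ and reading off the $\ggg_{[-1]}$-component gives $\sum_{j} c_j D_j = 0$, so every $c_j = 0$; then $a = b \in T \cap \sum_{j \geq 1}\ggg_{[j]} = 0$, forcing $\Delta(\mathscr{D}_1) = 0$. For $i > 1$, pair $\mathscr{D}_i$ with $\mathscr{D}_1$ to obtain $\Delta(\mathscr{D}_i) = [c, \mathscr{D}_i]$ with $c \in \mathfrak{z}_{\ggg}(\mathscr{D}_1) = \sum_{j} \bbf \mathscr{D}_j$. By Lemma \ref{lem for D_i}, each $\mathscr{D}_j$ is a power of $\mathscr{D}_1$ in the associative algebra $\mathrm{End}(\mathfrak{A}_n)$, so all $\mathscr{D}_j$ commute pairwise and $[c,\mathscr{D}_i]=0$, yielding $\Delta(\mathscr{D}_i) = 0$.
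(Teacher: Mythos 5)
Your argument is correct. Parts (1) and (2) coincide with the paper's proof: in both cases one pins the witnessing element down via Proposition \ref{Try1} and uses the homogeneity constraint coming from $\frakd^{(1)}_{\lambda}$ (i.e.\ Lemma \ref{delta lem}) to force the bracket into a single graded piece or into high degree. Part (3) is where you genuinely diverge, and your route is arguably cleaner. The paper first uses Lemma \ref{delta lem} together with part (2) to constrain $\Delta(\mathscr{D}_i)$ to a span of the specific monomials $(\prod_{k=i}^{j}x_k^{p-1})D_{j+1}$, then compares coefficients against $[b,\mathscr{D}_i]$ with $b_{-1}=b_0=0$ and argues that no such monomial can be produced. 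You instead intersect three centralizers: the witness $a$ for the pair $(\mathscr{D}_1,\frakd^{(1)}_{\lambda})$ lies in $T=\ggg_{[0]}$, the witness $b$ for $(\mathscr{D}_1,\sum_i x_i^2D_i)$ lies in $\sum_{j\ge 1}\ggg_{[j]}$, and $a-b\in\mathfrak{z}_{\ggg}(\mathscr{D}_1)=\sum_j\bbf\mathscr{D}_j$ by Lemma \ref{lem on centralizer}(3); reading off the degree $-1$ component kills all coefficients, whence $a=b\in\ggg_{[0]}\cap\sum_{j\ge1}\ggg_{[j]}=0$ and $\Delta(\mathscr{D}_1)=0$. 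Your bootstrap for $i>1$ is also sound: the witness for the pair $(\mathscr{D}_i,\mathscr{D}_1)$ lies in $\sum_j\bbf\mathscr{D}_j$, and since by Lemma \ref{lem for D_i} each $\mathscr{D}_j$ is (up to sign) a power of $\mathscr{D}_1$ in $\mathrm{End}(\mathfrak{A}_n)$ and the bracket of $W_n$ is the operator commutator, these all commute with $\mathscr{D}_i$. What your approach buys is that it avoids the delicate term-by-term degree bookkeeping of the paper's comparison of the two expressions for $\Delta(\mathscr{D}_i)$, at the cost of invoking the full strength of Lemma \ref{lem on centralizer}(3) for $p>2$ (which the paper's proof of part (3) does not actually need).
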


\begin{proof}
(1) Let $X\in\ggg_{[k]}$. 
For $\sum_{i=1}^nx_i^2D_i$ and $X$, there exists $Y=\sum_{i\geq -1} Y_{i}\in\ggg$ with $Y_{i}\in\ggg_{[i]}$ for $i\geq -1$ such that $0=\Delta(\sum_{i=1}^nx_i^2D_i)=[Y, \sum_{i=1}^nx_i^2D_i]$ and $\Delta(X)=[Y, X]$. It follows from Proposition \ref{Try1} that $Y_{0}=0$. Since $X, \Delta(X)\in\ggg_{[k]}$, we further obtain from Lemma \ref{delta lem} that
$$\Delta(X)=[Y, X]=[Y_{0}, X]=0.$$

(2) Since $\frakd_\nu^{(\frac{p+1}{2})}\in\ggg_{[\frac{p-1}{2}]}$ for $\nu=(1,1,\cdots, 1)$, it follows from the statement (1) that $\Delta\big(\frakd_\nu^{(\frac{p+1}{2})}\big)=0$. Then for any $X\in\sum_{k\ge r}\ggg_{[k]}$, there exists $Y=\sum_{i\geq -1} Y_{i}\in\ggg$ with $Y_{i}\in\ggg_{[i]}$ for $i\geq -1$ such that
$$0=\Delta(\frakd_\nu^{(\frac{p+1}{2})})=[Y, \frakd_\nu^{(\frac{p+1}{2})}],$$ and $\Delta(X)=[Y, X]$. Note that $Y_{k}=0$ for $k<\frac{p-1}{2}$ by Proposition \ref{Try1}, we further have
$$\Delta(X)=[Y, X]=\sum_{i\geq\frac{p-1}{2}}[Y_i, X]\in\sum_{k\ge r+\frac{p-1}{2}}\ggg_{[k]}.$$

(3) By Lemma \ref{delta lem} and the statement (2), we see that
\begin{equation}\label{Eq1 for D}
\Delta(\mathscr{D}_i)=
\sum\limits_{j=i}^{n-1}l_i^{(j+1)}(\prod\limits_{k=i}^jx_k^{p-1})D_{j+1},
\end{equation}
where $l_i^{(j)}\in\bbf$ for $i< j\leq n$.

On the other hand, for $\sum_{i=1}^nx_i^2D_i$ and $\mathscr{D}_i$, there exists $b=\sum_{i\geq -1}b_{i}\in\ggg$ with $b_{i}\in\ggg_{[i]}$ for $i\geq -1$  such that $0=\Delta\big(\sum_{i=1}^nx_i^2D_i\big)=\big[b, \sum_{i=1}^nx_i^2D_i\big]$ and $\Delta(\mathscr{D}_i)=[b, \mathscr{D}_i]$. Then
$b_{-1}=b_{0}=0$ by Proposition \ref{Try1}. Hence
\begin{equation}\label{Eq2 for D}
\Delta(\mathscr{D}_i)=[b, \mathscr{D}_i]=\sum_{k\geq 1}[b_{k}, D_i+\sum\limits_{j=i}^{n-1}(\prod\limits_{k=i}^jx_k^{p-1})D_{j+1}].
\end{equation}
By comparing the right hand sides of (\ref{Eq1 for D}) and (\ref{Eq2 for D}), we have
$$ l_i^{(j+1)}(\prod\limits_{k=i}^jx_k^{p-1})D_{j+1}=[b_{(p-1)(j-i+1)}, D_i]+
\sum\limits_{s=i}^{j}[b_{(p-1)(j-s)},(\prod\limits_{k=i}^sx_k^{p-1})D_{s+1}], \, i< j\leq n-1.$$
The right-hand-side of this equation does not produce any term of the form
$(\prod_{k=i}^jx_k^{p-1})D_{j+1}$ since $b_0=0$.
This implies that $l_i^{(j)}=0$ for $i< j\leq n$, i.e., $\Delta(\mathscr{D}_i)=0$, as desired.
\end{proof}

We define the support of $X=\sum_{\alpha\in A_n}\sum_{1\le i\le n}a_{\alpha, i}x^\alpha D_i\in\ggg$, where $a_{\alpha, i}\in F$,  as
$${\rm supp}(X):=\{(\alpha,i):a_{\alpha, i}\ne0\}.$$
In this section, from now on, we take a regular vector
 $\lambda=(\lambda_1,\cdots, \lambda_n)\in\bbf^n$, and
  let $\Delta$ be a 2-local derivation on $\ggg$ such that $\Delta(\frakd^{(1)}_{\lambda})=\Delta(\sum\limits_{i=1}^nx_i^2D_i)=0$ if $p>2$, and
  $\Delta(\frakd^{(1)}_{\lambda})=\Delta(\mathscr{D}_1)=0$ if $p=2$.

 We want to show that $\Delta=0$. To the contrary, assume that there is
  $X=\sum_{i= r}^{r+s}X_{i}\in\ggg$ with $X_i\in\ggg_{[i]}$ for $r\leq i\leq r+s$,
$ X_{r}\ne0$, $ X_{r+s}\ne0$ such that $\Delta(X)\ne0$. We need to deduce a contradiction. Thanks to Lemma \ref{delta lem}, we can write
\begin{equation}\label{delta x}
X':=\Delta(X)=\sum_{i= r}^{r+s}X'_{i},
\end{equation}
where $X'_i\in\ggg_{[i]}$ for $r\leq i\leq r+s$. We may choose such an  $X$ so that $r+s$ is maximal and then $s$ is minimal.
We may further assume that
$|{\rm supp}(X_{r+s})|$ and $|{\rm supp}(X_{r+s-1})|$ are  minimal in the sense that, for  any $Y=\sum_{i=r}^{r+s}Y_{i}\in\ggg$, where  $Y_i\in\ggg_{[i]}$ for any $i\geq -1$, if $|{\rm supp}(X_{i})|=|{\rm supp}(Y_{i})|$ for $i\in\{r+s-1, r+s\}\setminus\{k\}$
and $|{\rm supp}(X_{k})|>|{\rm supp}(Y_{k})|$, then  $\Delta(Y)=0$.

%

%
%
%
%
%
%

The following observation is elementary.

\begin{lem}\label{X'} We have
$X'_i=0$ if $i< r+s-1$, that is, $X'= X'_{r+s}+X'_{r+s-1}.$
\end{lem}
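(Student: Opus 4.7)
The plan is to use only the maximality of $r+s$ among elements of $\ggg$ on which $\Delta$ is nonzero, together with the fact that $\ggg$ is $\mathbb{Z}$-graded with smallest component in degree $-1$; the finer minimality conditions on $s$ and on $|{\rm supp}(X_{r+s})|,|{\rm supp}(X_{r+s-1})|$ will not be needed here. Fix any index $i$ with $r\le i\le r+s-2$; if no such $i$ exists (i.e.\ $s\le 1$), the claim is immediate from Lemma \ref{delta lem}, which already confines $X'$ to the degrees appearing in $X$.

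Truncate $X$ by setting
\[
Y:=\sum_{j=r}^{i+1}X_j,\qquad Z:=X-Y=\sum_{j=i+2}^{r+s}X_j,
\]
so the top homogeneous degree of $Y$ is at most $i+1\le r+s-1<r+s$. The maximality of $r+s$ therefore forces $\Delta(Y)=0$. Next apply the 2-local property to the pair $(X,Y)$: there exists $a\in\ggg$ with $\Delta(X)=[a,X]$ and $[a,Y]=\Delta(Y)=0$. Decomposing $a=\sum_{m\ge-1}a_m$ with $a_m\in\ggg_{[m]}$, we obtain
\[
\Delta(X)=[a,X]=[a,Y]+[a,Z]=[a,Z]=\sum_{m\ge -1}\sum_{\ell\ge i+2}[a_m,X_\ell],
\]
and each summand lies in $\ggg_{[m+\ell]}$ with $m+\ell\ge -1+(i+2)=i+1$. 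Hence $\Delta(X)\in\sum_{k\ge i+1}\ggg_{[k]}$, which forces $X'_i=0$.

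Running this argument for every $i\in\{r,r+1,\ldots,r+s-2\}$, and observing that $X'_i=0$ for $i<r$ holds automatically by Lemma \ref{delta lem}, yields $X'=X'_{r+s-1}+X'_{r+s}$, as claimed. I do not foresee any real obstacle: the only point that needs to be verified is that the truncation $Y$ genuinely sits strictly below top degree $r+s$ so that the maximality hypothesis on $r+s$ applies, and this is guaranteed by the range $i\le r+s-2$. In particular, the argument is uniform in the characteristic and does not distinguish between $p=2$ and $p>2$.
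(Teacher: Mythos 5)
There is a genuine gap, and it lies in the single step your whole argument rests on: the claim that ``the maximality of $r+s$ forces $\Delta(Y)=0$'' for the truncation $Y=\sum_{j=r}^{i+1}X_j$. The extremal choice of $X$ says that $r+s$ is the \emph{largest} top degree occurring among elements on which $\Delta$ does not vanish; consequently it gives $\Delta(Z)=0$ only for elements $Z$ whose top degree is \emph{strictly greater} than $r+s$ (this is exactly how it is used later, e.g.\ for $X-x^{\tau}D_2$ in Proposition \ref{key prop1}). It says nothing at all about elements of top degree \emph{below} $r+s$, which is precisely where your truncations $Y$ live since $i+1\le r+s-1$. None of the other extremality hypotheses (minimality of $s$, which only covers elements of top degree exactly $r+s$ with smaller spread, or the support-minimality in degrees $r+s-1,\,r+s$, which only covers elements supported in $[r,r+s]$ with the same top-degree data except a strictly smaller support at one of those two spots) applies to $\sum_{j=r}^{i+1}X_j$ either. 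So $\Delta(Y)=0$ is unjustified, and the rest of the computation, while formally fine, has nothing to stand on.

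The fix is to truncate from the top rather than from the bottom. The paper considers $X-X_{r+s}$, which \emph{is} covered by the stated minimality hypothesis: it is supported in degrees $[r,r+s]$, has $|{\rm supp}((X-X_{r+s})_{r+s})|=0<|{\rm supp}(X_{r+s})|$ and unchanged support in degree $r+s-1$, hence $\Delta(X-X_{r+s})=0$. Applying the $2$-local property to the pair $(X,\,X-X_{r+s})$ gives $a\in\ggg$ with $[a,X-X_{r+s}]=0$ and $X'=[a,X]=[a,X_{r+s}]$; writing $a=a_{-1}+a_0+\cdots$ with $a_m\in\ggg_{[m]}$ places $[a,X_{r+s}]$ in $\sum_{k\ge r+s-1}\ggg_{[k]}$, and combining with \eqref{delta x} yields $X'=X'_{r+s-1}+X'_{r+s}$. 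Note that your degree bookkeeping in the last display is the same kind of computation, just applied to the wrong piece; the essential content of the lemma is identifying a truncation of $X$ that the extremality assumptions actually kill.
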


\begin{proof}  From the minimal conditions on $X$, we know that $\Delta(X-X_{r+s})=0$. There is an element $a\in\ggg$ such that
$$0=\Delta(X-X_{r+s})=[a,  X-X_{r+s}],\,\,\,
X'=\Delta(X)=[a, X]=[a, X_{r+s}].$$
The statement follows from (\ref{delta x}) and $a=a_{-1}+a_0+\cdots+a_m$ with $a_i\in\ggg_{[i]}$ for $-1\leq i\leq m$.
\end{proof}

\begin{lem}\label{X_{r+s}} If $(\alpha, i)\in {\rm supp}(X_{r+s})$, then
$$ X'_{r+s}\in{\rm span}_{\bbf} \{x^\alpha D_i, x^{\alpha+\epsilon_k-\epsilon_j} D_i,
x^\alpha D_k-\alpha_k x^{\alpha+\epsilon_i-\epsilon_k} D_i:k\ne i, 1\le j\le n\},$$
and
$$X'_{r+s-1}\in{\rm span}_{\bbf}\{x^{\alpha-\epsilon_j}D_i: 1\leq j\leq n\}.$$
\end{lem}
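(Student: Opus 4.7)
The strategy is to exploit the 2-local property by perturbing $X$ by a single term of $X_{r+s}$. Fix $(\alpha,i)\in {\rm supp}(X_{r+s})$ and let $c\in\bbf^*$ be its coefficient. Set $Y:=X-cx^\alpha D_i$; then $Y$ agrees with $X$ in every homogeneous component except at degree $r+s$, where $|{\rm supp}(Y_{r+s})|=|{\rm supp}(X_{r+s})|-1$ while $|{\rm supp}(Y_{r+s-1})|=|{\rm supp}(X_{r+s-1})|$. The minimality assumptions placed on $X$ (handled exactly as in the proof of Lemma~\ref{X'} for the element $X-X_{r+s}$) then force $\Delta(Y)=0$.

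Next, applying the 2-local property to the pair $(X,Y)$ produces $a\in\ggg$ with $\Delta(X)=[a,X]$ and $0=\Delta(Y)=[a,Y]$. Subtracting gives
$$X'=\Delta(X)=[a,\,X-Y]=c\,[a,\,x^\alpha D_i].$$
Decompose $a=\sum_{j\ge -1}a_j$ with $a_j\in\ggg_{[j]}$. Since $x^\alpha D_i\in\ggg_{[r+s]}$, each bracket $[a_j,x^\alpha D_i]$ lies in $\ggg_{[r+s+j]}$, and distinct $j$'s land in distinct graded pieces. By Lemma~\ref{X'}, $X'\in\ggg_{[r+s-1]}\oplus\ggg_{[r+s]}$, which forces $[a_j,x^\alpha D_i]=0$ for all $j\ge 1$ and isolates
$$X'_{r+s-1}=c\,[a_{-1},\,x^\alpha D_i],\qquad X'_{r+s}=c\,[a_0,\,x^\alpha D_i].$$

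What remains is an explicit commutator computation. Writing $a_{-1}=\sum_{j=1}^n b_jD_j$ gives
$$[a_{-1},x^\alpha D_i]=\sum_{j=1}^n b_j\alpha_j\,x^{\alpha-\epsilon_j}D_i,$$
which lies in ${\rm span}_{\bbf}\{x^{\alpha-\epsilon_j}D_i:1\le j\le n\}$, as required for $X'_{r+s-1}$. Writing $a_0=\sum_{k,l}d_{kl}x_k D_l$ and applying $[x_k D_l,x^\alpha D_i]=\alpha_l x^{\alpha+\epsilon_k-\epsilon_l}D_i-\delta_{ik}x^\alpha D_l$ splits into three cases: $k=l=i$ gives a scalar multiple of $x^\alpha D_i$; $k=i$, $l\ne i$ gives $-(x^\alpha D_l-\alpha_l x^{\alpha+\epsilon_i-\epsilon_l}D_i)$; and $k\ne i$ gives $\alpha_l x^{\alpha+\epsilon_k-\epsilon_l}D_i$. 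These are exactly the three families listed in the statement for $X'_{r+s}$.

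The one place to be careful is the justification that $\Delta(Y)=0$ when $X_{r+s}$ consists of the single term $cx^\alpha D_i$, so that $Y_{r+s}=0$ and the top degree of $Y$ drops below $r+s$; this is subsumed by the same hierarchy of maximality/minimality choices used in Lemma~\ref{X'}. Everything else is graded-degree bookkeeping and the short commutator calculation above, so I do not anticipate any further obstacle.
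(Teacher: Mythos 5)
Your proposal is correct and follows essentially the same route as the paper: subtract the single term $a_{\alpha,i}x^\alpha D_i$, invoke the support-minimality hypothesis on $X$ to get $\Delta(X-a_{\alpha,i}x^\alpha D_i)=0$, apply 2-locality to the pair to obtain $X'=[a,a_{\alpha,i}x^\alpha D_i]$, and read off $X'_{r+s}\in[\ggg_{[0]},x^\alpha D_i]$ and $X'_{r+s-1}\in[\ggg_{[-1]},x^\alpha D_i]$ from the grading. Your worry about the case $|{\rm supp}(X_{r+s})|=1$ is already covered by the minimality condition as stated (it permits $|{\rm supp}(Y_{r+s})|=0$), and your explicit commutator computation just spells out what the paper leaves implicit.
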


\begin{proof} From the minimal conditions on $X$, we know that $\Delta(X-a_{\alpha, i}x^\alpha D_i)=0$. There is an element $a\in\ggg$ such that
$$0=\Delta(X-a_{\alpha, i}x^\alpha D_i)=[a,  X-a_{\alpha, i}x^\alpha D_i],\,\,\,
X'=\Delta(X)=[a, X]=[a, a_{\alpha, i}x^\alpha D_i].$$
The statement follows from $X'_{r+s}\in [\ggg_{[0]}, a_{\alpha, i}x^\alpha D_i]$ and  $X'_{r+s-1}\in [\ggg_{[-1]}, a_{\alpha, i}x^\alpha D_i]$.
\end{proof}

%

\begin{lem}\label{X_{r+s}-3} If $(\alpha, 1), (\alpha+\epsilon_1-\epsilon_2, 1)\in {\rm supp}(X_{r+s})$, then
$$ {\rm supp}(X'_{r+s})\subset   \{(\alpha+\epsilon_k-\epsilon_2, 1):  2\le k\le n\}.$$
\end{lem}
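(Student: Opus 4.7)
The plan is to apply the argument underlying Lemma~\ref{X_{r+s}} separately to the two distinguished summands $x^\alpha D_1$ and $x^{\alpha+\epsilon_1-\epsilon_2}D_1$ of $X_{r+s}$, and then to intersect the two resulting constraints on $X'_{r+s}$. Set $\beta:=\alpha+\epsilon_1-\epsilon_2$. By exactly the same minimality-plus-2-locality argument used in the proof of that lemma (applied once after removing the term $a_{\alpha,1}x^\alpha D_1$ from $X$, and once after removing $a_{\beta,1}x^\beta D_1$), one obtains elements $u,v\in\ggg_{[0]}$ with
$$X'_{r+s}=[u,x^\alpha D_1]=[v,x^\beta D_1].$$

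Next I would expand these brackets. Writing $u=\sum_{1\le j,k\le n}u_{jk}x_jD_k$, a direct calculation in $W_n$ gives
$$[u,x^\alpha D_1]=\Big(\sum_{j,k}u_{jk}\alpha_k\,x^{\alpha+\epsilon_j-\epsilon_k}\Big)D_1-\sum_{k=1}^n u_{1k}x^\alpha D_k,$$
and analogously for $[v,x^\beta D_1]$ with $\beta$ and $v_{jk}$ in place of $\alpha$ and $u_{jk}$. Comparing the $D_k$-components for each $k\ge 2$, the identity $-u_{1k}x^\alpha=-v_{1k}x^\beta$ combined with $\alpha\ne\beta$ forces $u_{1k}=v_{1k}=0$ for every $k\ge 2$. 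Hence $X'_{r+s}=f(x)\,D_1$ for some polynomial $f\in\mathfrak{A}_n$.

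The final step compares the two resulting support descriptions for $f$. With $u_{1k}=0$ for $k\ge 2$, the first bracket shows $\mathrm{supp}(f)\subset S_\alpha:=\alpha+\Sigma$, and the second shows $\mathrm{supp}(f)\subset S_\beta:=\beta+\Sigma$, where $\Sigma:=\{0\}\cup\{\epsilon_j-\epsilon_k:j\ge 2,\,1\le k\le n\}$. Any element $\gamma=\alpha+\delta_1=\beta+\delta_2$ of $S_\alpha\cap S_\beta$ must satisfy $\delta_1-\delta_2=\epsilon_1-\epsilon_2$ with $\delta_1,\delta_2\in\Sigma$; a short multiset case analysis, in which the requirement $j\ge 2$ inside $\Sigma$ is decisive, shows that the only possibilities have $\delta_1\in\{\epsilon_k-\epsilon_2:2\le k\le n\}$. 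This yields $S_\alpha\cap S_\beta=\{\alpha+\epsilon_k-\epsilon_2:2\le k\le n\}$, which is precisely the support bound claimed for $X'_{r+s}$.

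The main obstacle I anticipate is the bookkeeping in the final multiset calculation. The algebraic crux is the $D_k$-projection step for $k\ge 2$, which reduces the problem to a polynomial comparison purely in the $D_1$-direction; once that reduction is in place, the remaining combinatorial check on $\Sigma$ is elementary.
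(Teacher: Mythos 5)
Your proposal is correct and follows essentially the same route as the paper: the paper's proof is precisely to apply Lemma~\ref{X_{r+s}} to each of $(\alpha,1)$ and $(\alpha+\epsilon_1-\epsilon_2,1)$ and intersect the two resulting constraints on $X'_{r+s}$, which is exactly what you carry out (your vanishing of the $D_k$-components for $k\ge 2$ and the multiset analysis on $\Sigma$ are the explicit form of that intersection, and both check out).
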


\begin{proof} The assertion follows directly from Lemma \ref{X_{r+s}}.
\end{proof}

%
%
%
%
%

As a consequence of Lemma \ref{X_{r+s}} and Lemma \ref{X_{r+s}-3}, we have the following result on the structure of $X_{r+s}$ when $X'_{r+s}\neq 0$, which is crucial to our further discussion.

\begin{cor}\label{X_{r+s}-6} If $X'_{r+s}\ne0$, then there exist $\alpha\in A_n$ and  $i\in\{1,2,\cdots, n\}$ such that
\begin{equation}\label{ca1}
{\rm supp}(X _{r+s})\subset  \{(\beta, i):  \beta\in A_n\},
\end{equation}
or
\begin{equation}\label{ca2}
 {\rm supp}(X _{r+s})\subset   \{(\alpha,k):  1\leq k\leq n\}.
\end{equation}
\end{cor}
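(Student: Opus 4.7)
The plan is to argue by contradiction: assume $X'_{r+s}\ne 0$, and that neither (\ref{ca1}) nor (\ref{ca2}) holds for ${\rm supp}(X_{r+s})$. The key observation is that this assumption forces the support to contain a pair $(\alpha, i),(\beta, j)\in{\rm supp}(X_{r+s})$ witnessing both failures simultaneously, i.e., with $i\ne j$ \emph{and} $\alpha\ne\beta$. If no such pair existed, every two support elements with distinct $D$-indices would necessarily share the same $x$-exponent; combined with any pair witnessing the failure of (\ref{ca2}), a short transitivity argument would collapse the two distinct $D$-indices guaranteed by the failure of (\ref{ca1}) into one, producing a contradiction.

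Next, I would apply Lemma \ref{X_{r+s}} to each of $(\alpha, i)$ and $(\beta, j)$ and intersect the resulting constraints. Writing $X'_{r+s}=\sum_\ell f_\ell D_\ell$, the first application forces $f_\ell=c_\ell x^\alpha$ for every $\ell\ne i$ while giving an explicit (coupled) description of $f_i$ involving the ``correction'' summands $x^\alpha D_k-\alpha_k x^{\alpha+\epsilon_i-\epsilon_k}D_i$; the second application similarly forces $f_\ell=c'_\ell x^\beta$ for every $\ell\ne j$. For any $\ell\notin\{i,j\}$ (nonempty when $n\ge 3$), the two proportionality conditions together with $\alpha\ne\beta$ force $f_\ell=0$; the low-rank case $n=2$ is handled by direct inspection. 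Hence only $f_i$ and $f_j$ can be nonzero, and each is a single monomial: $f_i=c'_i x^\beta$ and $f_j=c_j x^\alpha$.

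Finally, I would match these single-monomial forms against the full description of $V_{(\alpha, i)}$ and $V_{(\beta, j)}$ produced by Lemma \ref{X_{r+s}}. Comparing the supports of the two sides monomial-by-monomial forces $\beta-\alpha$ to be of the form $\epsilon_m-\epsilon_p$ for some $m,p$; after a suitable reindexing this places a pair inside ${\rm supp}(X_{r+s})$ satisfying the hypothesis of Lemma \ref{X_{r+s}-3}. Applying that lemma confines ${\rm supp}(X'_{r+s})$ to a single $D$-column $\{(\alpha+\epsilon_k-\epsilon_p, i) : k\ne p\}$, and combining this with the proportionality conditions from Step~2 collapses $X'_{r+s}$ to zero, contradicting the standing assumption.

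The principal obstacle is the monomial bookkeeping in this last step: the correction terms $x^\alpha D_k-\alpha_k x^{\alpha+\epsilon_i-\epsilon_k}D_i$ genuinely couple the $D_i$- and $D_j$-components, so the argument must branch on whether $\alpha_j$ and $\beta_i$ vanish modulo $p$, and each branch must be shown to collapse, possibly via a second invocation of Lemma \ref{X_{r+s}-3} applied to an alternative pair in the support.
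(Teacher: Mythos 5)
Your overall route is the paper's: split on whether a single derivation index occurs in ${\rm supp}(X_{r+s})$, and otherwise intersect the two spans supplied by Lemma \ref{X_{r+s}} at a pair $(\alpha,i),(\beta,j)$ with $i\ne j$ and $\alpha\ne\beta$; your Step 2 (all $f_\ell=0$ for $\ell\notin\{i,j\}$, with $f_i\in\bbf x^\beta$ and $f_j\in\bbf x^\alpha$) is exactly what the paper extracts. The gap is in your final step, which is where the actual content of the corollary lies. Lemma \ref{X_{r+s}-3} requires two support elements with the \emph{same} derivation index whose exponents differ by $\epsilon_i-\epsilon_j$ with the ``$+$'' coordinate equal to that common index $i$. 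You only conclude $\beta-\alpha=\epsilon_m-\epsilon_p$ for unspecified $m,p$, which is not enough: if, say, $\beta-\alpha=\epsilon_3-\epsilon_4$ while the derivation indices in play are $1$ and $2$, no reindexing places you in the hypothesis of Lemma \ref{X_{r+s}-3}. Moreover you never explain how a same-index pair gets into ${\rm supp}(X_{r+s})$ in the first place; this requires combining Lemma \ref{delta lem} (which gives ${\rm supp}(X'_{r+s})\subseteq{\rm supp}(X_{r+s})$) with the nonvanishing of the cross-term coefficient $\alpha_j$, and you explicitly defer the branches $\alpha_j\equiv 0$ and $\beta_i\equiv 0$ rather than closing them.

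The paper settles all of this in one stroke by solving the intersection exactly: taking $(i,j)=(1,2)$, it shows a nonzero element of the intersection must be a multiple of $x^\alpha D_2-\alpha_2 x^{\alpha+\epsilon_1-\epsilon_2}D_1$, and that this forces both $\beta=\alpha+\epsilon_1-\epsilon_2$ and $(\alpha_1+1)\alpha_2\equiv 1\pmod p$. The congruence eliminates your problematic branches automatically ($\alpha_2\ne 0$ and $\beta_1\ne 0$), and it guarantees that both $(\alpha,2)$ and $(\alpha+\epsilon_1-\epsilon_2,1)$ lie in ${\rm supp}(X'_{r+s})\subseteq{\rm supp}(X_{r+s})$; the pair $(\alpha,1),(\alpha+\epsilon_1-\epsilon_2,1)$ then satisfies the hypothesis of Lemma \ref{X_{r+s}-3}, whose conclusion (only $D_1$ occurs in $X'_{r+s}$) contradicts $(\alpha,2)\in{\rm supp}(X'_{r+s})$. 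Until you carry out this explicit determination of the intersection (or an equivalent computation that pins down $\beta-\alpha$ and the nonvanishing of the relevant coefficients), your outline does not constitute a proof.
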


\begin{proof}
Let $\Upsilon=\{ 1\le k\le n:\exists \,\alpha\in A_n\text{ such that }(\alpha, k)\in {\rm supp}(X _{r+s})\}$. If $|\Upsilon|=1$, then  (\ref{ca1}) holds. Now suppose that  $|\Upsilon|>1$.

In this case, we may assume $(\alpha, 1), (\beta, 2)\in {\rm supp}(X _{r+s})$ without loss of generality. If $\beta\neq\alpha$, then it follows from
Lemma \ref{X_{r+s}} that
$$X'_{r+s}\in {\rm span}_{\bbf} \{x^\alpha D_k-\alpha_k x^{\alpha+\epsilon_1-\epsilon_k} D_1:k\ne 1\}\cap {\rm span}_{\bbf} \{x^\beta D_l-\beta_l x^{\beta+\epsilon_2-\epsilon_l} D_2:l\ne 2\}.$$
This implies that
$$\beta=\alpha+\epsilon_1-\epsilon_2, \,\,\,(\alpha_1+1)\alpha_2\equiv 1\,(\text{mod}\,p),$$ and there exists some nonzero constant $c\in\bbf$ such that $$X'_{r+s}=c(x^\alpha D_2-\alpha_2 x^{\alpha+\epsilon_1-\epsilon_2} D_1).$$
This implies that $(\alpha+\epsilon_1-\epsilon_2, 1)\in {\rm supp}(X _{r+s})$ and $(\alpha, 2)\in{\rm supp}(X' _{r+s})$ by Lemma \ref{delta lem}. It contradict with Lemma \ref{X_{r+s}-3}. Hence, (\ref{ca2}) holds.
\end{proof}

\begin{cor}\label{X_{r+s}-7} If there exist some $\alpha,\beta\in A_n$ and $i\neq j\in\{1,\cdots, n\}$ such that $$\{(\alpha,i), (\beta, j)\}\subset
 {\rm supp}(X _{r+s}), $$ then $X'_{r+s-1}=0$.
\end{cor}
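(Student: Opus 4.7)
The plan is to exploit the minimality of $|{\rm supp}(X_{r+s})|$ to obtain two different representations of $X'_{r+s-1}$ as a commutator involving a degree $-1$ element with a single monomial vector field, one expression using only $D_i$ and the other using only $D_j$; since $i\neq j$, these force $X'_{r+s-1}=0$.

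First I would mimic the technique of Lemma \ref{X_{r+s}}. For each $(\gamma,k)\in{\rm supp}(X_{r+s})$, deleting the single term $a_{\gamma,k}x^\gamma D_k$ from $X$ strictly reduces $|{\rm supp}(X_{r+s})|$, so the minimality assumption yields $\Delta(X-a_{\gamma,k}x^\gamma D_k)=0$. The 2-local property applied to the pair $(X,\,X-a_{\gamma,k}x^\gamma D_k)$ then produces an element $a^{(\gamma,k)}\in\ggg$ with $[a^{(\gamma,k)},X-a_{\gamma,k}x^\gamma D_k]=0$ and $X'=[a^{(\gamma,k)},a_{\gamma,k}x^\gamma D_k]$. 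Expanding $a^{(\gamma,k)}=\sum_{t\geq-1}a^{(\gamma,k)}_{t}$ in graded components and using that $x^\gamma D_k\in\ggg_{[r+s]}$ together with Lemma \ref{X'} (which says $X'$ is concentrated in degrees $r+s-1$ and $r+s$), I obtain the crucial identity
\begin{equation*}
X'_{r+s-1}=[a^{(\gamma,k)}_{-1},\,a_{\gamma,k}x^\gamma D_k].
\end{equation*}

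The key observation is that if $a^{(\gamma,k)}_{-1}=\sum_{l=1}^n c_l^{(\gamma,k)}D_l$, then a direct bracket computation gives
\begin{equation*}
[a^{(\gamma,k)}_{-1},\,a_{\gamma,k}x^\gamma D_k]=a_{\gamma,k}\sum_{l=1}^n c_l^{(\gamma,k)}\gamma_l\,x^{\gamma-\epsilon_l}D_k,
\end{equation*}
so $X'_{r+s-1}$ lies in ${\rm span}_{\bbf}\{x^{\gamma-\epsilon_l}D_k:1\leq l\leq n\}$; only the index $k$ appears in the $D$-slot. Applying this with $(\gamma,k)=(\alpha,i)$ places $X'_{r+s-1}$ in ${\rm span}_{\bbf}\{x^{\alpha-\epsilon_l}D_i\}$, while applying it with $(\gamma,k)=(\beta,j)$ places it in ${\rm span}_{\bbf}\{x^{\beta-\epsilon_l}D_j\}$. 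Since $i\neq j$, these two subspaces meet only in $0$, forcing $X'_{r+s-1}=0$.

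The argument is essentially bookkeeping once the minimality is invoked correctly; there is no serious obstacle beyond making sure that the minimality hypothesis indeed applies when one drops a single support term from $X_{r+s}$ (which it does, since $|{\rm supp}(X_{r+s-1})|$ is unchanged and $|{\rm supp}(X_{r+s})|$ strictly decreases), and that Lemma \ref{X'} correctly restricts the graded support of $X'$ so that $X'_{r+s-1}$ is captured precisely by the $t=-1$ component of $a^{(\gamma,k)}$.
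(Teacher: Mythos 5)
Your argument is correct and is essentially the paper's: the paper deduces the corollary directly from Lemma \ref{X_{r+s}} (whose second conclusion, $X'_{r+s-1}\in{\rm span}_{\bbf}\{x^{\gamma-\epsilon_l}D_k\}$ for each $(\gamma,k)$ in the support, is exactly what you re-derive from minimality and the degree $-1$ component of the implementing element), and then concludes from the trivial intersection of the two spans for $i\neq j$. You have simply inlined the proof of that lemma rather than citing it, so there is no substantive difference.
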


\begin{proof} The assertion follows directly from Lemma \ref{X_{r+s}}.
\end{proof}

Let $$\mathscr{T}_1:={\rm span}_{\bbf}\{I_1=\sum_{i=1}^n x_iD_i, h_j=x_jD_j+x_1D_j:2\le j\le n\},$$
and when $p>2$,  for $2\leq k\leq n$, set
$$\mathscr{T}_k:={\rm span}_{\bbf}\{I_k=x_kD_k+I_1, h_j=x_jD_j+x_1D_j, \mathbbm{h}_k=x_kD_k+x_1^2D_k :2\le j\le n,\, j\neq k\}.$$

\begin{lem}\label{I_k} We have $ \mathfrak{z}_{\ggg}(\mathscr{T}_k)=\mathscr{T}_k$ for any $k=1,2,\cdots,n$. Moreover,
each $\mathscr{T}_k$ is a Cartan subalgebra  of $\ggg$.
\end{lem}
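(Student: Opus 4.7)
The plan is to realize each $\mathscr{T}_k$ as the image of the standard torus $T=\sum_{i=1}^n\bbf x_iD_i$ under an automorphism of $\ggg$ induced by a change of variables in $\mathfrak{A}_n$, thereby reducing both assertions to the analogous (and elementary) facts for $T$: namely, $\mathfrak{z}_{\ggg}(T)=T$ and that $T$ is a Cartan subalgebra of $\ggg$.

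Concretely, for each $k\in\{1,\ldots,n\}$ I introduce the $\bbf$-algebra endomorphism $\sigma_k$ of $\mathfrak{A}_n$ defined on generators by $\sigma_k(x_1)=x_1$, $\sigma_k(x_j)=x_1+x_j$ for $j\in\{2,\ldots,n\}\setminus\{k\}$, and, when $k\ge 2$ (which requires $p>2$), $\sigma_k(x_k)=x_1^2+x_k$. Setting $y_i:=\sigma_k(x_i)$, the identity $(a+b)^p=a^p+b^p$ in characteristic $p$ gives $y_i^p=0$ in $\mathfrak{A}_n$, so $\sigma_k$ is well defined; the explicit inverse formulas $x_1=y_1$, $x_j=y_j-y_1$, and $x_k=y_k-y_1^2$ show that $\sigma_k$ is invertible, hence an automorphism of $\mathfrak{A}_n$. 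It then induces an automorphism $\tilde\sigma_k$ of $\ggg=\Der(\mathfrak{A}_n)$ by $\tilde\sigma_k(D)=\sigma_k\circ D\circ\sigma_k^{-1}$, and a direct check gives $\tilde\sigma_k(x_iD_i)=y_iD_{y_i}$, where $D_{y_i}$ is the derivation of $\mathfrak{A}_n$ characterized by $D_{y_i}(y_j)=\delta_{ij}$.

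The key verification is $\tilde\sigma_k(T)=\mathscr{T}_k$. Using the chain rule to express each $D_{y_i}$ in the basis $D_1,\ldots,D_n$, one finds $y_jD_{y_j}=(x_1+x_j)D_j=h_j$ for $j\in\{2,\ldots,n\}\setminus\{k\}$, $y_kD_{y_k}=(x_1^2+x_k)D_k=\mathbbm{h}_k$ when $k\ge 2$, and the combination $y_1D_{y_1}=I_k-\sum_{j\ne k,\,j\ge 2}h_j-2\mathbbm{h}_k$, from which $I_k\in\tilde\sigma_k(T)$. Hence $\mathscr{T}_k\subseteq\tilde\sigma_k(T)$, and since both are $n$-dimensional, equality holds. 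The case $k=1$ is the same argument with all $\mathbbm{h}_k$ clauses omitted.

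With $\mathscr{T}_k=\tilde\sigma_k(T)$ in hand, both conclusions follow by transport of structure. The relation $\mathfrak{z}_{\ggg}(T)=T$ is elementary: from $[x_iD_i,x^{\alpha}D_j]=(\alpha_i-\delta_{ij})x^{\alpha}D_j$, any element centralizing all of $T$ must have $\alpha_i=\delta_{ij}$ for every $i$, forcing $\alpha=\epsilon_j$ and hence the centralizer to lie in $T$. Applying $\tilde\sigma_k$ then yields $\mathfrak{z}_{\ggg}(\mathscr{T}_k)=\tilde\sigma_k(\mathfrak{z}_{\ggg}(T))=\mathscr{T}_k$. Similarly, $T$ is abelian and self-normalizing, hence a Cartan subalgebra of $\ggg$, and this property is preserved under automorphisms, so $\mathscr{T}_k$ is a Cartan subalgebra as well. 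The only nontrivial bookkeeping occurs in the $k\ge 2$ coordinate computation: the $x_1^2$ shift in $\sigma_k(x_k)$ produces a cross-term $-2x_1D_k$ in $D_{y_1}$, which is exactly compensated by the $x_1^2D_k$ summand in $\mathbbm{h}_k$, explaining why the definition of $\mathscr{T}_k$ requires precisely that element.
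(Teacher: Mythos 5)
Your proposal is correct and follows essentially the same route as the paper: both realize $\mathscr{T}_k$ as the image of the standard torus $T$ under the automorphism of $\ggg$ induced by the substitution $x_1\mapsto x_1$, $x_j\mapsto x_j+x_1$ ($j\neq 1,k$), $x_k\mapsto x_k+x_1^2$, and then transport $\mathfrak{z}_{\ggg}(T)=T$ and the Cartan property. The only cosmetic difference is that the paper invokes Wilson's theorem for the induced Lie algebra isomorphism, whereas you verify the needed facts (well-definedness, invertibility, $\mathfrak{z}_{\ggg}(T)=T$) directly.
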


\begin{proof}
Define the following algebra isomorphisms
\begin{eqnarray*}
\psi_1:\, \mathfrak{A}_n &\longrightarrow &\mathfrak{A}_n\\
x_i&\longmapsto & x_i+(1-\delta_{i1})x_1,\quad 1\leq i\leq n,
\end{eqnarray*}
and for $2\leq k\leq n$,
\begin{eqnarray*}
\psi_k:\, \mathfrak{A}_n &\longrightarrow &\mathfrak{A}_n\\
x_i&\longmapsto & x_i+(1-\delta_{i1})x_1^{1+\delta_{ik}},\quad 1\leq i\leq n.
\end{eqnarray*}
Then it follows from \cite[Theorem 2]{Wi} that these algebra isomorphisms $\psi_k$ ($1\leq k\leq n$) induce the following Lie algebra isomorphisms,
\begin{eqnarray*}
\widetilde{\psi_k}:\, \ggg=\Der(\mathfrak{A}_n) &\longrightarrow & \ggg=\Der(\mathfrak{A}_n)\\
E&\longmapsto &\psi_k\circ E\circ\psi_k^{-1},\,\,\forall\,E\in\ggg.
\end{eqnarray*}
It follows from direct computation that
$$\psi_1(x_1D_1)=x_1\big(D_1-\sum\limits_{j=2}^nD_j\big),\quad \psi_1(x_lD_l)=h_l,\,2\leq l\leq n,$$
and for $2\leq k\leq n$,
$$\psi_k(x_1D_1)=x_1\big(D_1-\sum\limits_{\stackrel{j=2}{j\neq k}}^n D_j\big)-2x_1^2D_k,\,\,\, \psi_k(x_lD_l)=h_l,\, 2\leq l\leq n\,\,\text{and}\,\, l\neq k,\,\,\,\psi_k(x_kD_k)=\mathbbm{h}_k.$$
Therefore, $\psi_k(T)=\mathscr{T}_k$ for any $1\leq k\leq n$, so that these $\mathscr{T}_k$ ($1\leq k\leq n$) are Cartan subalgebras of $\ggg$. Moreover,
$$\mathfrak{z}_{\ggg}(\mathscr{T}_k)=\mathfrak{z}_{\ggg}\big(\psi_k(T)\big)=\psi_k\big(\mathfrak{z}_{\ggg}(T)\big)=\psi_k(T)=\mathscr{T}_k.$$
We complete the proof.
\end{proof}

\begin{lem}\label{X_{r+s}-8} Suppose that
$ {\rm supp}(X _{r+s})\subset\{(\beta, 1): \beta\in A_n\}$. Then
$X'=c[I_1, X]$ for some $c\in\bbf$. In particular, if $X'\neq 0$, then $X_l=0$ for any $l<r+s-1$ with $p\nmid l$; and if  $X'_l\ne0$ then $p \nmid l$. Moreover, if $p>2$ and there exists $(\beta, 1)\in {\rm supp}(X _{r+s})$ with $\beta_1<p-1$, then there exists some $c_k\in\bbf$ such that $X'=c_k[I_k, X],\,2\leq k\leq n.$
\end{lem}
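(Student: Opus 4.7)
My plan is to apply the 2-local property on carefully chosen pairs and leverage the minimality conditions on $X$.

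First, I establish $\Delta(I_1)=0$. Applying 2-local to the pair $(\frakd^{(1)}_\lambda,I_1)$ produces $a'\in\ggg$ with $[a',\frakd^{(1)}_\lambda]=\Delta(\frakd^{(1)}_\lambda)=0$ and $\Delta(I_1)=[a',I_1]$; Lemma \ref{lem on centralizer}(1) forces $a'\in T$, and since $T$ is abelian with $I_1\in T$, we conclude $\Delta(I_1)=0$. Next, applying 2-local to $(I_1,X)$ yields $a\in\mathfrak{z}_\ggg(I_1)$ with $X'=[a,X]$. Since $\ad I_1$ acts as multiplication by $k$ on $\ggg_{[k]}$, one has $\mathfrak{z}_\ggg(I_1)=\bigoplus_{k\geq 0,\,p\mid k}\ggg_{[k]}$, and I decompose $a=a_{(0)}+a_{(p)}+a_{(2p)}+\cdots$.

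Second, I pin down $X'_{r+s}$. The support hypothesis $\text{supp}(X_{r+s})\subset\{(\beta,1):\beta\in A_n\}$ together with Lemma \ref{delta lem} restricts $X'_{r+s}$ to a combination of $x^\beta D_1$'s. Writing $a_{(0)}=\sum_{i,j}c_{ij}x_iD_j$, the expansion $[a_{(0)},x^\beta D_1]=\sum_{i,j}c_{ij}\beta_j x^{\beta+\epsilon_i-\epsilon_j}D_1-\sum_j c_{1j}x^\beta D_j$ combined with the absence of $D_j$-terms ($j\neq 1$) in $X'_{r+s}$ forces $c_{1j}=0$ for $j\neq 1$. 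The constraints from Lemma \ref{X_{r+s}} applied at each $(\alpha,1)\in\text{supp}(X_{r+s})$ together with the intersection restrictions of Lemma \ref{X_{r+s}-3} then give $X'_{r+s}=\mu X_{r+s}$ for some $\mu\in\bbf$, equivalently $[a_{(0)}-cI_1,X_{r+s}]=0$ where I set $c:=\mu/(r+s)$ when $p\nmid r+s$ (and separately argue $\mu=0$ when $p\mid r+s$, taking $c=0$).

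Third, to propagate the same scaling to $X'_{r+s-1}$, I use minimality: since $Y:=X-X_{r+s}$ has top degree $<r+s$, the chosen maximality of $r+s$ gives $\Delta(Y)=0$, so the 2-local pair $(X,Y)$ produces $b\in\ggg$ with $X'=[b,X_{r+s}]$. Degree-matching yields $X'_{r+s}=[b_{(0)},X_{r+s}]$ and $X'_{r+s-1}=[b_{(-1)},X_{r+s}]$, while $[b_{(i)},X_{r+s}]=0$ for all other $i$. Invoking Lemma \ref{X_{r+s}} once more on $X_{r+s-1}$ (whose support controls $X'_{r+s-1}$ via Lemma \ref{delta lem}) then forces $X'_{r+s-1}$ to be a scalar multiple of $X_{r+s-1}$, and comparing the two presentations $X'=[a,X]=[b,X_{r+s}]$ identifies the scalar as $c(r+s-1)$ with the same $c$ from above. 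This proves (A), and part (B) is immediate from $X'_l=clX_l$: if $X'\neq 0$ then $c\neq 0$, so $X'_l=0$ for $l<r+s-1$ gives $X_l=0$ whenever $p\nmid l$, and $X'_l\neq 0$ forces $p\nmid l$. For (C) with $p>2$, the parallel argument applies with $I_k\in T$ in place of $I_1$ (still $\Delta(I_k)=0$ by the same reasoning, and $\mathfrak{z}_\ggg(I_k)$ admits an analogous eigenspace decomposition via the eigenvalues $|\alpha|-1+\alpha_k-\delta_{jk}$ of $\ad I_k$ on $x^\alpha D_j$); the hypothesis ``$\beta_1<p-1$ for some $(\beta,1)\in\text{supp}(X_{r+s})$'' enters exactly at the step requiring room to shift in the $x_1$-direction when invoking the Lemma \ref{X_{r+s}} analogue.

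The main obstacle I foresee is controlling the higher pieces $a_{(p)},a_{(2p)},\dots$ of $a$ when $s\geq p$, which can in principle contribute $[a_{(kp)},X_{r+s-kp}]$ to $X'_{r+s}$, and separately, showing $\mu=0$ (rather than merely a scalar) when $p\mid r+s$ so that $c$ is well-defined. Both issues should be addressed by leveraging the chosen minimality of $|\text{supp}(X_{r+s})|$, $|\text{supp}(X_{r+s-1})|$, and $s$: construct auxiliary elements $Y'=X-b_{\alpha_0}x^{\alpha_0}D_1$ for suitable $(\alpha_0,1)\in\text{supp}(X_{r+s})$, whose $\Delta(Y')=0$ by minimality, and extract alternative 2-local presentations of $X'$ that constrain the higher components away and force the extra vanishing.
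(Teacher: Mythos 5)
Your overall strategy---pair $X$ with elements already known to be annihilated by $\Delta$ and exploit the resulting centralizer constraint on the implementing element $a$---is the right one, and your first step ($\Delta(I_1)=0$ via the pair $(\frakd^{(1)}_{\lambda},I_1)$ and Lemma \ref{lem on centralizer}(1)) is correct. The gap is that the element you then pair with $X$ is too degenerate. Since $\ad I_1$ acts on $\ggg_{[k]}$ as the scalar $k$, the centralizer $\mathfrak{z}_{\ggg}(I_1)=\bigoplus_{p\mid k}\ggg_{[k]}$ is very large: its degree-zero piece is all of $\ggg_{[0]}\cong\mathfrak{gl}_n(\bbf)$ (dimension $n^2$), and the higher pieces $a_{(p)},a_{(2p)},\dots$ can feed terms $[a_{(kp)},X_{r+s-kp}]$ into degree $r+s$ whenever $s\geq p$. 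You correctly identify both problems in your closing paragraph but do not resolve them: the sketch ``construct auxiliary elements \dots\ and extract alternative 2-local presentations'' is a plan, not an argument. Moreover, the intermediate claim that Lemmas \ref{X_{r+s}} and \ref{X_{r+s}-3} force $X'_{r+s}=\mu X_{r+s}$ is not available to you here, because those lemmas constrain the element implementing $\Delta$ on the pair $(X,\,X-a_{\alpha,i}x^{\alpha}D_i)$, not the element implementing it on the pair $(I_1,X)$; and the identification of the scalar on $X'_{r+s-1}$ as $c(r+s-1)$ ``with the same $c$'' is asserted rather than derived. So the conclusion $X'=c[I_1,X]$, on which parts (B) and (C) entirely rest, is not established.

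The paper sidesteps all of this by pairing $X$ not with $I_1$ but with $h_{\lambda}=\lambda_1I_k+\sum_j\lambda_jh_j$ (plus $\lambda_k\mathbbm{h}_k$ when $k\geq2$), a \emph{regular} element of the Cartan subalgebra $\mathscr{T}_k=\psi_k(T)$: since $h_{\lambda}$ corresponds to $\frakd^{(1)}_{\lambda}$ under $\psi_k$ and $\mathfrak{z}_{\ggg}(\frakd^{(1)}_{\lambda})=T$, one gets $\mathfrak{z}_{\ggg}(h_{\lambda})=\mathscr{T}_k$, an $n$-dimensional space (Lemma \ref{I_k}); the vanishing $\Delta(h_{\lambda})=0$ comes from Lemma \ref{delta lem} together with Corollary \ref{Try2}(2) for $p>2$, and from Lemma \ref{lem on centralizer}(3) for $p=2$. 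Writing $a=a_1I_k+\sum_ja_jh_j(+a_k\mathbbm{h}_k)$, the unwanted terms $-a_ja_{\beta,1}x^{\beta}D_j$ (resp. $-2a_ka_{\beta,1}x^{\beta+\epsilon_1}D_k$) contradict ${\rm supp}(X'_{r+s})\subseteq{\rm supp}(X_{r+s})\subset\{(\beta,1)\}$ (resp. $X'_{r+s+1}=0$), so $a=a_1I_k$ immediately and $X'=a_1[I_k,X]$, with no case analysis on higher graded components. To rescue your version you would have to either replace $I_1$ by such a regular Cartan element, or supply the missing argument that the non-toral part of $a_{(0)}$ and all $a_{(kp)}$ with $k\geq1$ contribute nothing---which is exactly the part you have left open.
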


\begin{proof}
For any $1\leq k\leq n$ and any   regular
vector $\lambda\in\bbf^n$, let
$$h_{\lambda}=
\begin{cases}
\lambda_1I_k+\sum\limits_{j=2}^n\lambda_jh_j, &\text{if}\,\,k=1, \vspace{1mm}\cr \lambda_1I_k+\sum\limits_{\stackrel{j=2}{j\neq k}}^n\lambda_jh_j+\lambda_k \mathbbm{h}_k, &\text{if}\,\,2\leq k\leq n\text{ and }p>2.
\end{cases}$$

If $p=2$, for $h_{\lambda}$ and $\mathscr{D}_1$, there exists $u=\sum_{i=1}^{n}a_i\mathscr{D}_i\in\sum_{i=1}^{n}\bbf \mathscr{D}_i$ with $a_i\in \bbf$ such that $\Delta(h_{\lambda})=[u, h_{\lambda}]$ by Lemma
\ref{lem on centralizer}(3). By Lemma \ref{delta lem} we see that
$$0=\bigg[\sum_{i=1}^{n}a_i{D}_i, h_{\lambda}\bigg]=a_1(\lambda_1D_1+\sum_{j=2}^n\lambda_jD_j)+\sum_{j=2}^na_j(\lambda_1+\lambda_j)D_j.
$$
Then it follows $a_i=0$ for all $i=1,2,\cdots, n$, i.e.,    $u=0$, and $\Delta(h_{\lambda})=0$.

 If $p>2$, it follows from Lemma \ref{delta lem} and Corollary \ref{Try2}(2) that $\Delta(h_{\lambda})=0$. For $h_{\lambda}$ and $X$,  there exists $a\in\ggg$ such that
$0=\Delta(h_{\lambda})=[a, h_{\lambda}]$ and $X'=[a, X]$. Then by Lemma \ref{I_k}, there exist $a_i\in\bbf, 1\leq i\leq n$ such that
$$a=
\begin{cases}
a_1I_k+\sum\limits_{j=2}^na_jh_j, &\text{if}\,\,k=1, \vspace{1mm}\cr a_1I_k+\sum\limits_{\stackrel{j=2}{j\neq k}}^na_jh_j+a_k \mathbbm{h}_k, &\text{if}\,\,2\leq k\leq n.
\end{cases}$$
Note that  ${\rm supp}(X'_{r+s})\subseteq {\rm supp}(X _{r+s})$ and $X'_{r+s+1}=0$ by Lemma \ref{delta lem}. If $a_j\ne0$ for any $j\ne1$, then $[a, X]$ has a nonzero term $-a_ja_{\beta, 1}x^{\beta}D_j$ or  $-2a_ja_{\beta, 1}x^{\beta+\epsilon_1}D_j$. It follows that $a_j=0$ for $2\leq j\leq n$, i.e.,
$a=a_1I_k$. Other assertions follow easily.
\end{proof}

\begin{rem}
The result in Lemma \ref{X_{r+s}-8} does not need the assumption that $s$ is minimal for $X$.
\end{rem}

\begin{prop}\label{key prop1}
Suppose ${\rm supp}(X _{r+s})\subset\{(\beta, 1): \beta\in A_n\}$. Then $X'=0$.
\end{prop}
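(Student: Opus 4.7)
The plan is to argue by contradiction, assuming $X' \ne 0$, and then exploit the formula furnished by Lemma \ref{X_{r+s}-8} together with an independent 2-local constraint obtained by pairing $X$ with $\mathscr{D}_1$.

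First, I would apply Lemma \ref{X_{r+s}-8} to obtain $X' = c[I_1, X]$ for some $c \in \bbf$. Because $\ad I_1$ acts on $\ggg_{[i]}$ as multiplication by $i$, and because $X' = X'_{r+s-1} + X'_{r+s}$ by Lemma \ref{X'}, matching homogeneous degrees forces
\[
c\,i\,X_i = 0 \quad (r \le i \le r+s-2), \qquad X'_{r+s}=c(r+s)X_{r+s}, \qquad X'_{r+s-1}=c(r+s-1)X_{r+s-1}.
\]
If $c=0$ we are done, so I may assume $c \ne 0$. Then $X_k = 0$ whenever $r \le k \le r+s-2$ and $p \nmid k$, and since $r+s$ and $r+s-1$ cannot both be divisible by $p$, at least one of $X'_{r+s}, X'_{r+s-1}$ is nonzero.

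Next I would bring in a second 2-locality. By assumption (or by Corollary \ref{Try2}(3) when $p>2$) we have $\Delta(\mathscr{D}_1)=0$, so the pair $(\mathscr{D}_1, X)$ yields $a \in \mathfrak{z}_{\ggg}(\mathscr{D}_1)$ with $X' = [a,X]$. By Lemma \ref{lem on centralizer}(3), $a = \sum_i b_i \mathscr{D}_i$. Combined with the previous paragraph this gives the key identity
\[
\sum_{i=1}^n b_i[\mathscr{D}_i, X] = c[I_1, X].
\]
Decomposing $\mathscr{D}_i$ into homogeneous pieces, its degree $-1$ piece is $D_i$ and all other pieces live in strictly higher degrees (at least $p-2$ when $p>2$, at least $0$ when $p=2$). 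I would then extract the degree-$(r+s)$ and degree-$(r+s-1)$ components of both sides. The contribution of $D_i$ to the degree-$(r+s)$ part of the left-hand side is $b_i[D_i, X_{r+s+1}] = 0$, so the degree-$(r+s)$ part comes only from the higher-degree pieces of $\mathscr{D}_i$ bracketed with the lower-degree pieces of $X$. Because $X$ only has nonzero components in degrees $r+s, r+s-1$ and a few $p$-divisible degrees in $[r, r+s-2]$, this degree matching is very restrictive and will force $c(r+s)X_{r+s} = 0$ (and similarly for degree $r+s-1$) outside a tightly controlled exceptional configuration, which in turn can be eliminated via the minimality of $|\mathrm{supp}(X_{r+s})|$ and $|\mathrm{supp}(X_{r+s-1})|$ (subtract a single monomial $a_{\beta,1}x^\beta D_1$ from $X$, apply the minimality-hypothesis to conclude $\Delta$ of the smaller element vanishes, and run 2-locality again). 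This contradicts $c \ne 0$, completing the proof.

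The main obstacle is the degree-by-degree bookkeeping: $\mathscr{D}_i$ is not homogeneous, so each $[\mathscr{D}_i, X]$ scatters across many degrees, and the analysis is quite sensitive to $p$. The case $p=2$ is the most delicate, because $\mathscr{D}_i$ then has a nonzero degree-$0$ piece $x_i D_{i+1}$ that contributes to the top two degrees in ways absent when $p > 2$, so one must treat the parity cases separately and combine them with the minimality of the supports to close the argument.
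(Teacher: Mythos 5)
There is a genuine gap here: your argument stops exactly where the difficulty begins. The first step (invoking Lemma \ref{X_{r+s}-8} to get $X'=c[I_1,X]$, hence $X'_{r+s}=c(r+s)X_{r+s}$, $X'_{r+s-1}=c(r+s-1)X_{r+s-1}$, and $X_k=0$ for $r\le k\le r+s-2$ with $p\nmid k$ when $c\ne0$) is correct and is also how the paper begins. But the decisive claim --- that comparing $\sum_i b_i[\mathscr{D}_i,X]$ with $c[I_1,X]$ in degrees $r+s$ and $r+s-1$ ``will force $c(r+s)X_{r+s}=0$ \ldots outside a tightly controlled exceptional configuration'' --- is asserted, not proved. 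Those exceptional configurations are real: the degree-$(r+s)$ component of $[\mathscr{D}_i,X]$ picks up terms $[(\prod_{k=i}^{j}x_k^{p-1})D_{j+1},\,X_l]$ whenever $l=r+s-(j-i+1)(p-1)+1$ happens to be a degree with $p\mid l$ lying in $[r,r+s-2]$, and nothing in your setup rules this out. Worse, the tool you propose for eliminating them --- the minimality of $|{\rm supp}(X_{r+s})|$ and $|{\rm supp}(X_{r+s-1})|$ --- only lets you delete monomials from the \emph{top two} homogeneous components; it gives no control over the lower components $X_l$ ($l\le r+s-2$, $p\mid l$) that actually generate the unwanted contributions. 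Even in the clean case, forcing $c(r+s)X_{r+s}=0$ only yields $p\mid (r+s)$, not a contradiction, so both degrees must be closed simultaneously; and the $p=2$ case, where $\mathscr{D}_i$ has a degree-$0$ piece $x_iD_{i+1}$ feeding directly into the top degrees and where ``$p\nmid k$'' kills only half the lower components, is left entirely open by your own admission.

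For comparison, the paper closes the argument by a different mechanism that you would need to replace. For $n\ge 2$ it first shows $r+s<n(p-1)-1$ by pairing $X$ with $D_j-X$ (whose $\Delta$-image vanishes by Lemma \ref{X_{r+s}-8} applied to $D_j-X$), so that $X'=[a_j,D_j]$ cannot reproduce the terms $x^{\tau}D_1$ or $x^{\tau-\epsilon_i}D_1$; it then pairs $X$ with $X-x^{\tau}D_2$, using the \emph{maximality of $r+s$} to get $\Delta(X-x^{\tau}D_2)=0$, whence $X'=[a,x^{\tau}D_2]$ has support among $\{(\tau-\epsilon_j,2)\}$, contradicting ${\rm supp}(X'_{r+s})\subseteq{\rm supp}(X_{r+s})\subset\{(\beta,1)\}$ from Lemma \ref{delta lem}. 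Note that this final step uses the hypothesis ${\rm supp}(X_{r+s})\subset\{(\beta,1):\beta\in A_n\}$ in an essential way that your sketch never does after quoting Lemma \ref{X_{r+s}-8}. The separate case $n=1$ is handled by pairing with $D_1$ and using $\mathfrak{z}_{\ggg}(D_1)=\bbf D_1$. Unless you carry out the degree bookkeeping in full (including $p=2$ and $p=3$) and find a genuine substitute for the support-minimality argument on the lower components, the proposal does not constitute a proof.
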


\begin{proof}
Assume $X'\neq 0$, we will deduce some contradictions in the following discussion.

Case 1: $n=1$.

In this case,  we have assumed that $p>2$. It follows from Lemma \ref{X_{r+s}-8} that $X=X_0+X_{r+s-1}+X_{r+s}$. Since $\Delta(D_1)=0$ by Corollary \ref{Try2}(1) and $\mathfrak{z}_{\ggg}(D_1)=\bbf D_1$, there exists some $c\in\bbf^*$ such that
$$X'=\Delta(X)=[cD_1, X].$$ This implies that $X'_{r+s}=0$ and $X'_{r+s-1}\neq 0$, so that $X_{r+s-1}\neq 0$. Again from Lemma \ref{X_{r+s}-8}
we see that $p|r+s$ and $p\nmid r+s-1>0$.
Consequently, $X'_{r+s-2}\neq 0$ which contradicts   Lemma \ref{X'}.

Case 2: $n\geq 2$.

In this case, we first claim that $r+s<n(p-1)-1$. Indeed, if $r+s=n(p-1)-1$, then $X_{r+s}=a_{\tau, 1}x^{\tau}D_1$, where $0\neq a_{\tau, 1}\in\bbf$, $\tau=\sum_{j=1}^n(p-1)\epsilon_j$. It follows from
Lemma \ref{X_{r+s}-8} that $\Delta(D_j-X)=0$ for any $1\leq j\leq n$. Then for $X$ and $D_j-X$, there exists some $a_j\in\ggg$ such that
$0=\Delta(D_j-X)=[a_j, D_j-X]$ and
\begin{equation*}\label{jud1}
X'=\Delta(X)=[a_j, X]=[a_j, D_j],\quad 1\leq j\leq n.
\end{equation*}
The right-hand-side can not produce the term $x^{\tau}D_1$, which
implies that $X'_{r+s}=0$. Hence, $X'_{r+s-1}\neq 0$. From Lemma \ref{X_{r+s}} we know that $X_{r+s-1}$ (also $X'_{r+s-1}$ with different coefficient) has a term $a_{\tau-\epsilon_i, 1}x^{\tau-\epsilon_i}D_1\ne0$ for some $i=1,2,\cdots, n$. Choose $j\in \{1,2,\cdots, n\}\setminus\{i\}$.
It follows from
Lemma \ref{X_{r+s}-8} that $\Delta(D_j-X)=0$. Then for $X$ and $D_j-X$, there exists some $a_j\in\ggg$ such that
$0=\Delta(D_j-X)=[a_j, D_j-X]$ and
\begin{equation}\label{jud1'}
X'=\Delta(X)=[a_j, X]=[a_j, D_j].
\end{equation}
The coefficient of the term $x^{\tau-\epsilon_i}D_1$ on the right hand side of (\ref{jud1'}) is $0$. This implies that $X'_{r+s-1}=0$, a contradiction. Therefore, $r+s<n(p-1)-1$.

According to the discussion above, and the assumption on $X$ at the beginning, we have $\Delta(X-x^{\tau}D_2)=0$. Then for $X$ and $X-x^{\tau}D_2$, there exists
some $a\in\ggg$ such that $$0=\Delta(X-x^{\tau}D_2)=[a, X-x^{\tau}D_2]$$ and $$X'=\Delta(X)=[a, X]=[a, x^{\tau}D_2].$$ This implies that $r+s=n(p-1)-2$, $X'=X'_{r+s}$, and ${\rm supp}(X')\subset\{(\tau-\epsilon_j, 2):1\leq j\leq n\}$. It contradicts with ${\rm supp}(X')\subset {\rm supp}(X)\subset \{(\beta, 1): \beta\in A_n\}$.

In conclusion, we have shown that $X'=0$. The proof is complete.
\end{proof}

\begin{prop}\label{key prop2}
Suppose $ {\rm supp}(X _{r+s})\subset   \{(\alpha,k):  1\le k\le n\}$. Then $X'=0$.
\end{prop}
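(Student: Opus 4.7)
The plan is to assume $X' \neq 0$ and derive a contradiction. By hypothesis $X_{r+s} = \sum_{k \in \Omega} c_k\,x^\alpha D_k$ for some $\alpha \in A_n$, some nonempty $\Omega \subseteq \{1,\dots,n\}$, and nonzero $c_k \in \bbf$. If $|\Omega|=1$, say $\Omega=\{k_0\}$, then $\mathrm{supp}(X_{r+s}) \subset \{(\beta,k_0):\beta \in A_n\}$, and the proof of Proposition~\ref{key prop1} goes through verbatim with the Cartan $\mathscr{T}_{k_0}$ and torus element $I_{k_0}$ in place of $\mathscr{T}_1$ and $I_1$, giving $X'=0$. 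So I may assume $|\Omega| \geq 2$. Then Corollary~\ref{X_{r+s}-7} forces $X'_{r+s-1}=0$, hence $X'=X'_{r+s}$ by Lemma~\ref{X'}, and Lemma~\ref{delta lem} further gives $X' = \sum_{l \in \Omega} c'_l\,x^\alpha D_l$ for some $c'_l \in \bbf$.

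The central move is, for each $k \in \Omega$, to apply the support-minimality assumption on $|\mathrm{supp}(X_{r+s})|$ to $Y^{(k)} := X - c_k x^\alpha D_k$, which has $|\mathrm{supp}(Y^{(k)}_{r+s})| = |\mathrm{supp}(X_{r+s})|-1$ and $|\mathrm{supp}(Y^{(k)}_{r+s-1})| = |\mathrm{supp}(X_{r+s-1})|$. Minimality yields $\Delta(Y^{(k)})=0$, and the 2-local property for the pair $(X,Y^{(k)})$ produces $a^{(k)} \in \ggg$ with $[a^{(k)},Y^{(k)}]=0$ and $[a^{(k)},X]=X'$, so that
$$X' = c_k\,[a^{(k)}, x^\alpha D_k].$$
Decomposing $a^{(k)} = \sum_{j\geq -1} a^{(k)}_j$ homogeneously and using $X' \in \ggg_{[r+s]}$, the brackets $[a^{(k)}_j,x^\alpha D_k]$ must vanish for $j \neq 0$. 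Writing $a^{(k)}_0 = \sum_{i,j} b^{(k)}_{ij}\,x_iD_j$ and expanding, the requirement that the resulting element lie in $\mathrm{span}_{\bbf}\{x^\alpha D_l : l \in \Omega\}$ forces $b^{(k)}_{kl}=0$ for $l \notin \Omega$, gives the identities $c'_l = -c_k\,b^{(k)}_{kl}$ for $l \in \Omega \setminus \{k\}$, and imposes $b^{(k)}_{ij}\alpha_j\,x^{\alpha+\epsilon_i-\epsilon_j}=0$ for every $i \neq j$.

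Running these identities over all $k \in \Omega$ and comparing them---supplemented by minimality applied to elements obtained from $X$ by removing two summands at once, by the 2-local property paired with the elements $h_\lambda \in \mathscr{T}_k$ (whose centralizer is $\mathscr{T}_k$ by Lemma~\ref{I_k} and on which $\Delta$ vanishes by the argument in the proof of Lemma~\ref{X_{r+s}-8}), and by the 2-local property paired with $\frakd^{(1)}_\lambda$, $\sum x_i^2D_i$ (when $p>2$) or $\mathscr{D}_1$ (when $p=2$)---yields an overdetermined linear system in the $b^{(k)}_{ij}$ forcing all $c'_l$ to vanish, contradicting $X' \neq 0$. The main obstacle is the non-generic case where some coordinate of $\alpha$ equals $0$ or $p-1$, since then several off-diagonal $b^{(k)}_{ij}$ escape the constraint $b^{(k)}_{ij}\alpha_j x^{\alpha+\epsilon_i-\epsilon_j}=0$; eliminating this residual freedom requires testing $\Delta$ against further elements whose centralizers are explicit by Lemma~\ref{lem on centralizer}, together with a careful case analysis on the profile of $\alpha$ and on which indices lie in $\Omega$.
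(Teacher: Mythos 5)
Your reduction when $|\Omega|=1$ is fine (a permutation of the variables is an automorphism of $W_n$, so Proposition \ref{key prop1} applies after relabelling), and for $|\Omega|\ge 2$ the setup through the identity $X'=c_k[a^{(k)},x^\alpha D_k]$ and the extraction of $c'_l=-c_k b^{(k)}_{kl}$ is correct. The gap is in your last paragraph: the claimed contradiction is never actually derived. In the generic case (all relevant $\alpha_j>0$ and $\alpha_i<p-1$) the constraint $b^{(k)}_{ij}\alpha_j x^{\alpha+\epsilon_i-\epsilon_j}=0$ does force $b^{(k)}_{kl}=0$ for $l\ne k$, and running over two distinct $k\in\Omega$ kills every $c'_l$ --- that part would close. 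But the term $b^{(k)}_{kl}\alpha_l x^{\alpha+\epsilon_k-\epsilon_l}D_k$ vanishes identically whenever $\alpha_l=0$ or $\alpha_k=p-1$, so in those cases $b^{(k)}_{kl}$, hence $c'_l$, is left completely unconstrained by this computation; for instance $\alpha=\tau$ yields no constraints at all. You acknowledge this and defer it to ``an overdetermined linear system'' plus ``a careful case analysis,'' but neither is exhibited, and overdeterminacy alone does not force the trivial solution. The degenerate $\alpha$'s are exactly where the work lies, so as written the proof is incomplete.

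The paper avoids the case analysis entirely: writing $X_{r+s}=x^\alpha\sum_k c_kD_k$ with $c_1\ne 0$, it uses the algebra isomorphism $x_i\mapsto c_iy_1+(1-\delta_{i1})y_i$ to induce a Lie algebra isomorphism $\widetilde{\varphi}$ from $\ggg$ onto $\Der(\mathfrak{B}_n)$ under which $X_{r+s}$ is sent to $\prod_j(c_jy_1+(1-\delta_{j1})y_j)^{\alpha_j}\widetilde{D_1}$, i.e.\ an element supported in a single direction. Transporting $\Delta$ along $\widetilde{\varphi}$ gives another 2-local derivation, and Proposition \ref{key prop1} applied to the image finishes uniformly in $\alpha$. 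If you want to salvage your direct approach, you would need to manufacture the missing constraints for the degenerate $\alpha$'s (e.g.\ by bracketing against implementing elements in degrees other than $0$ and $-1$), which is essentially the work the change of variables is designed to bypass.
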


\begin{proof}
According to the assumption, we can write
$$X_{r+s}=c_1x^{\alpha}D_1+c_2x^{\alpha}D_2+\cdots+c_nx^{\alpha}D_n=x^{\alpha}(c_1D_1+c_2D_2+\cdots+c_nD_n),$$
where $c_i\in\bbf, 1\leq i\leq n$. We can assume that $c_1\neq 0$ without loss of generality. Let ${\mathfrak{B}}_n=\bbf[y_1,\cdots, y_n]/(y_1^p,\cdots, y_n^p)$ be the divided power algebra of $n$ variables $y_1,\cdots, y_n$, where $(y_1^p,\cdots, y_n^p)$ denotes the ideal of $\bbf[y_1,\cdots, y_n]$ generated by $y_i^p, 1\leq i\leq n$. Define the following algebra isomorphism
\begin{eqnarray*}
\varphi:\, \mathfrak{A}_n &\longrightarrow &\mathfrak{B}_n\\
x_i&\longmapsto & c_iy_1+(1-\delta_{i1})y_i,\quad 1\leq i\leq n.
\end{eqnarray*}
Then it induces the following Lie algebra isomorphism
\begin{eqnarray*}
\widetilde{\varphi}:\, \ggg=\Der(\mathfrak{A}_n) &\longrightarrow &\hhh:=\Der(\mathfrak{B}_n)\\
E&\longmapsto &\varphi\circ E\circ\varphi^{-1},\,\,\forall\,E\in\ggg.
\end{eqnarray*}

It follows from a direct computation that for any $\alpha\in A_n$ and $1\leq i\leq n$, we have
$$\widetilde{\varphi}(x^{\alpha}D_i)=
\begin{cases}
\frac{1}{c_1}\prod\limits_{j=1}^n(c_jy_1+(1-\delta_{j1})y_j)^{\alpha_j}\big(\widetilde{D_1}-\sum\limits_{k=2}^n c_k\widetilde{D_k}\big), &\text{if}\,\,i=1, \vspace{1mm} \cr
\prod\limits_{j=1}^n(c_jy_1+(1-\delta_{j1})y_j)^{\alpha_j}\widetilde{D_i}, &\text{if}\,\,2\leq i\leq n,
\end{cases}$$
where $\widetilde{D_i}$ is a derivation on $\mathfrak{B}_n$ defined by $\widetilde{D_i}(y_j)=\delta_{ij}$ for $1\leq i, j\leq n$. The Lie algebra $\hhh$ is a
free $\mathfrak{B}_n$-module of rank $n$ with basis $\widetilde{D_1},\cdots, \widetilde{D_n}$, and it has a natural $\bbz$-grading similar as the Lie algebra $\ggg$. In particular, $Y:=\widetilde{\varphi}(X)=Y_r+\cdots+Y_{r+s}$ with $Y_j\in\hhh_{[j]}$ for $r\leq j\leq r+s$, and $Y_r\neq 0, Y_{r+s}=\widetilde{\varphi}(X_{r+s})=\prod\limits_{j=1}^n(c_jy_1+(1-\delta_{j1})y_j)^{\alpha_j}\widetilde{D_1}\neq 0$.

Moreover, the above Lie algebra homomorphism $\widetilde{\varphi}$ and the 2-local derivation $\Delta$ on $\ggg$ induce a 2-local derivation $\widetilde{\Delta}$ on $\hhh$. Precise speaking,
\begin{eqnarray*}
\widetilde{\Delta}:\, \hhh=\Der(\mathfrak{B}_n) &\longrightarrow &\hhh=\Der(\mathfrak{B}_n)\\
E&\longmapsto &\widetilde{\varphi}\big(\Delta\big(\widetilde{\varphi}^{-1}(E)\big)\big),\,\,\forall\,E\in\hhh.
\end{eqnarray*}
Indeed, for any $E, F\in\hhh$, we have $\widetilde{\varphi}^{-1}(E), \widetilde{\varphi}^{-1}(F)\in\ggg$. Since $\Delta$ is a 2-local derivation on $\ggg$, there exists $D\in\ggg$ such that
$$\Delta\big(\widetilde{\varphi}^{-1}(E)\big)=[D, \widetilde{\varphi}^{-1}(E)],\quad \Delta\big(\widetilde{\varphi}^{-1}(F)\big)=[D, \widetilde{\varphi}^{-1}(F)].$$
Hence,
$$\widetilde{\Delta}(E)=\widetilde{\varphi}\big(\Delta\big(\widetilde{\varphi}^{-1}(E)\big)\big)=[\widetilde{\varphi}(D), E],\quad
\widetilde{\Delta}(F)=\widetilde{\varphi}\big(\Delta\big(\widetilde{\varphi}^{-1}(F)\big)\big)=[\widetilde{\varphi}(D), F].$$
This implies that $\widetilde{\Delta}$ is a 2-local derivation on $\hhh$.

Suppose $X'=\Delta(X)\neq 0$, then $$0\neq\widetilde{\varphi}(X')=\widetilde{\varphi}\big(\Delta\big(\widetilde{\varphi}^{-1}\widetilde{\varphi}(X)\big)\big)=\widetilde{\Delta}
\big(\widetilde{\varphi}(X)\big)=\widetilde{\Delta}(Y).$$
Without loss of generality, we can assume that $Y$ satisfies the same assumption as $X$. Then it follows from Proposition \ref{key prop1} that $\widetilde{\Delta}(Y)=0$, a contradiction. Hence, $X'=0$. We complete the proof.
\end{proof}

\begin{prop}\label{prop for W}
Suppose $\Delta$ is a 2-local derivation on $\ggg$ over a field $\bbf$ of prime characteristic $p$  with cardinality no less than $p^n$ such that $$\Delta\big(\frakd^{(1)}_{\lambda}\big)=\Delta\Big(\sum\limits_{i=1}^nx_i^2D_i\Big)=0\,\,\text{\rm if}\,\, p>2,$$ and
$$\Delta\big(\frakd^{(1)}_{\lambda}\big)=\Delta\big(\mathscr{D}_1\big)=0\,\,\text{\rm if}\,\,p=2$$
for some regular
vector $\lambda\in\bbf^n$. Then $\Delta=0$.
\end{prop}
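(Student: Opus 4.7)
The plan is to argue by contradiction, directly assembling the preparatory results of this section. Suppose $\Delta\neq 0$ and fix an $X=\sum_{i=r}^{r+s}X_i$ with $\Delta(X)\neq 0$ realizing the successive extremality conditions set up before Lemma \ref{X'}: $r+s$ maximal, then $s$ minimal, and finally the pointwise-minimal supports of $X_{r+s}$ and $X_{r+s-1}$. Using Lemma \ref{X'} I would write $X':=\Delta(X)=X'_{r+s-1}+X'_{r+s}$ and aim to force both homogeneous components to vanish, producing the contradiction $X'=0$.

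First I would treat the case $X'_{r+s}\neq 0$. By Corollary \ref{X_{r+s}-6}, ${\rm supp}(X_{r+s})$ sits in either a single column $\{(\beta,i):\beta\in A_n\}$ or a single row $\{(\alpha,k):1\le k\le n\}$. The row case is exactly the hypothesis of Proposition \ref{key prop2}, which outputs $X'=0$. The column case with $i=1$ is exactly the hypothesis of Proposition \ref{key prop1}, which again outputs $X'=0$. For the column case with $i\neq 1$, I would reduce to $i=1$ through a change-of-variables isomorphism of $\ggg$: when $p>2$, the straight swap $x_1\leftrightarrow x_i$ preserves $\sum_j x_j^2 D_j$ and maps $\frakd^{(1)}_{\lambda}$ to another $\frakd^{(1)}_{\sigma\lambda}$ with $\sigma\lambda$ still regular, so the transported 2-local derivation still satisfies our standing hypotheses; when $p=2$, I would first upgrade $\Delta(\mathscr{D}_1)=0$ to $\Delta(\mathscr{D}_i)=0$ for every $i$ using Lemma \ref{lem on centralizer}(3) (the element giving the pair $(\mathscr{D}_1,\mathscr{D}_i)$ lies in $\mathfrak{z}_\ggg(\mathscr{D}_1)=\sum\bbf\mathscr{D}_k$, which commutes with every $\mathscr{D}_i$), and then invoke the Proposition \ref{key prop2}-style change of variables to move the single-column support into column $1$. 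Next I would treat the case $X'_{r+s}=0$, in which necessarily $X'_{r+s-1}\neq 0$. Lemma \ref{X_{r+s}} forces, for each $(\alpha,i)\in{\rm supp}(X_{r+s})$, the inclusion $X'_{r+s-1}\in{\rm span}_{\bbf}\{x^{\alpha-\epsilon_j}D_i:1\le j\le n\}$, so the support of $X'_{r+s-1}$ is concentrated in column $i$. Were ${\rm supp}(X_{r+s})$ to contain two pairs with distinct column indices $i_1\neq i_2$, then $X'_{r+s-1}$ would simultaneously sit in columns $i_1$ and $i_2$, whose intersection is $0$, forcing $X'_{r+s-1}=0$ and contradicting our standing assumption. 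Hence ${\rm supp}(X_{r+s})$ must lie in a single column and the argument of the previous paragraph returns $X'=0$ once more.

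The step I expect to be the principal technical obstacle is the column-index reduction from general $i$ to $i=1$ in characteristic $2$: because $\mathscr{D}_1$ is not symmetric in $x_1,\dots,x_n$, the clean permutation trick available for $p>2$ fails, and one must combine the centralizer calculation of Lemma \ref{lem on centralizer}(3) with a Proposition \ref{key prop2}-style change of variables to legally transport the situation into the regime of Proposition \ref{key prop1}. Everything else is a streamlined bookkeeping combination of Lemma \ref{X'}, Lemma \ref{X_{r+s}}, Corollary \ref{X_{r+s}-6}, and the two key propositions already established.
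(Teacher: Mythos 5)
Your proposal reproduces the paper's own proof essentially step for step: contradiction via the extremal $X$, the decomposition $X'=X'_{r+s-1}+X'_{r+s}$ from Lemma \ref{X'}, the dichotomy of Corollary \ref{X_{r+s}-6} feeding into Propositions \ref{key prop1} and \ref{key prop2}, and the single-column conclusion of Corollary \ref{X_{r+s}-7} (which you re-derive directly from Lemma \ref{X_{r+s}}) in the case $X'_{r+s}=0$. The only divergence is that you make explicit the reduction from a general column index $i$ to $i=1$ --- a step the paper dispatches with ``without loss of generality'' --- and your remarks that for $p>2$ this is a harmless permutation of variables while for $p=2$ it requires first upgrading to $\Delta(\mathscr{D}_i)=0$ via Lemma \ref{lem on centralizer}(3) constitute a legitimate refinement of the same argument rather than a different route.
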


\begin{proof}
Suppose $X':=\Delta(X)\neq 0$ for some $X=X_r+\cdots+X_{r+s}\in\ggg$ satisfying the assumptions stated in the paragraph before Lemma \ref{X'}, where $X_i\in\ggg_{[i]}$ for $r\leq i\leq r+s$, $ X_{r}\ne0$, $ X_{r+s}\ne0$. Then it follows from Lemma \ref{X'} that $X'= X'_{r+s}+X'_{r+s-1}$ for $X'_j\in\ggg_{[j]}$ for $j=r+s-1, r+s$. We will deduce some contradictions in the following discussion.

Case 1: $X'_{r+s}\neq 0$.

In this case, without loss of generality, there exists some $\alpha\in A_n$ such that one of the following two subcases may happen by Corollary \ref{X_{r+s}-6}.

Subcase 1.1: ${\rm supp}(X _{r+s})\subset  \{(\beta, 1):  \beta\in A_n\}$.

In this subcase, it follows from Proposition \ref{key prop1} that $X'=0$, a contradiction.

Subcase 1.2: $ {\rm supp}(X _{r+s})\subset   \{(\alpha,k):  1\leq k\leq n\}$.

In this subcase, it follows from Proposition \ref{key prop2} that $X'=0$, a contradiction.

Case 2: $X'_{r+s}=0$.

In this case, $X'_{r+s-1}\neq 0$. Thanks to Corollary \ref{X_{r+s}-7}, without loss of generality, we may assume that
$${\rm supp}(X _{r+s})\subset\{(\beta, 1): \beta\in A_n\}.$$
Then it follows from Proposition \ref{key prop1} that $X'=0$, a contradiction.

In conclusion, we have shown that $\Delta=0$. The proof is complete.
\end{proof}

We are now in the position to present the following main result in this section.

\begin{thm}\label{main thm}
Let $\ggg=W_n$ be the simple Jacobson-Witt algebra over a field $\bbf$ of prime characteristic $p$ with cardinality no less than $p^n$. Then every 2-local derivation on $\ggg$ is a derivation.
\end{thm}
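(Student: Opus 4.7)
The plan is to reduce the statement to Proposition \ref{prop for W} by twice subtracting suitably chosen inner derivations from $\Delta$. The underlying observation is that if $\Delta$ is a 2-local derivation on $\ggg$ and $D \in \Der(\ggg)$, then $\Delta - D$ is again a 2-local derivation: indeed, for any $x, y \in \ggg$, if $D_{xy}$ witnesses the 2-local property of $\Delta$ at $(x,y)$, then $D_{xy} - D$ is a derivation witnessing it for $\Delta - D$. By Lemma \ref{derivation lem}, every derivation on $\ggg$ is inner, so every such $D$ may be taken to be of the form $\ad(a)$ with $a \in \ggg$.

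First I would fix a regular vector $\lambda \in \bbf^n$, which exists by the cardinality hypothesis. By the 2-local property applied to the pair $\bigl(\frakd_\lambda^{(1)}, \frakd_\lambda^{(1)}\bigr)$ (or any pair containing $\frakd_\lambda^{(1)}$), there exists $a \in \ggg$ such that $\Delta(\frakd_\lambda^{(1)}) = [a, \frakd_\lambda^{(1)}]$. Put $\Delta_1 := \Delta - \ad(a)$. Then $\Delta_1$ is a 2-local derivation with $\Delta_1(\frakd_\lambda^{(1)}) = 0$.

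Second, set $Z := \sum_{i=1}^n x_i^2 D_i$ if $p > 2$ and $Z := \mathscr{D}_1$ if $p = 2$. Apply the 2-local property of $\Delta_1$ to the pair $(\frakd_\lambda^{(1)}, Z)$: there is $b \in \ggg$ with $\Delta_1(\frakd_\lambda^{(1)}) = [b, \frakd_\lambda^{(1)}]$ and $\Delta_1(Z) = [b, Z]$. Since $\Delta_1(\frakd_\lambda^{(1)}) = 0$, we get $[b, \frakd_\lambda^{(1)}] = 0$, whence $b \in T$ by Lemma \ref{lem on centralizer}(1). Because $T$ centralizes $\frakd_\lambda^{(1)}$, the 2-local derivation $\Delta_2 := \Delta_1 - \ad(b)$ satisfies simultaneously
\[
\Delta_2(\frakd_\lambda^{(1)}) = 0 \quad \text{and} \quad \Delta_2(Z) = 0.
\]

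Finally, $\Delta_2$ meets exactly the hypotheses of Proposition \ref{prop for W}, so $\Delta_2 = 0$. Unwinding the two reductions gives $\Delta = \ad(a+b)$, which is in particular a derivation, completing the proof. Almost all of the technical work has been absorbed into Proposition \ref{prop for W} and the centralizer computation of Lemma \ref{lem on centralizer}(1); the only mild subtlety here is verifying the closure of the class of 2-local derivations under subtraction of derivations and checking that the element $b$ in the second reduction automatically lies in the torus $T$ so that the first normalization is preserved. No further obstacle is expected.
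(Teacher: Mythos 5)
Your proposal is correct and follows essentially the same route as the paper: reduce to Proposition \ref{prop for W} by subtracting an inner derivation, using Lemma \ref{derivation lem} to know every derivation is inner. The only (inessential) difference is that the paper obtains a single element $a$ by applying the 2-local property directly to the pair $\bigl(\frakd_\lambda^{(1)},\, Z\bigr)$, which normalizes both values at once, whereas you perform the normalization in two steps and therefore need the extra (correct) observation via Lemma \ref{lem on centralizer}(1) that your second element $b$ lies in $T$.
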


\begin{proof}
Let $\Delta$ be a 2-local derivation on $\ggg$. Take a regular vector $\lambda\in\bbf^n$.
Then there exists an element $a\in\ggg$
such that $$\Delta\big(\frakd^{(1)}_{\lambda}\big)=\big[a, \frakd^{(1)}_{\lambda}\big],\quad \Delta\bigg(\sum\limits_{i=1}^n x_i^2D_i\bigg)=\bigg[a, \sum\limits_{i=1}^n x_i^2D_i\bigg],\,\,\text{if}\,\,p>2,$$ and
$$\Delta\big(\frakd^{(1)}_{\lambda}\big)=\big[a, \frakd^{(1)}_{\lambda}\big],\quad \Delta(\mathscr{D}_1)=[a, \mathscr{D}_1],\,\,\text{if}\,\,p=2.$$
Set $\Delta_1=\Delta-\ad a$. Then $\Delta_1$ is a 2-local derivation on $\ggg$ such that  $$\Delta_1\big(\frakd^{(1)}_{\lambda}\big)=\Delta_1\bigg(\sum\limits_{i=1}^nx_i^2D_i\bigg)=0,\,\,\text{if}\,\,p>2,$$ and
$$\Delta_1\big(\frakd^{(1)}_{\lambda}\big)=\Delta_1(\mathscr{D}_1)=0,\,\,\text{if}\,\,p=2.$$
It follows from Proposition \ref{prop for W} that $\Delta_1=0$. Thus $\Delta=\ad a$ is a derivation. The proof is complete.
\end{proof}

The following example implies that the assumption on the simplicity of the Jacobson-Witt algebras in Theorem \ref{main thm} is necessary.
\begin{example}
Let $\ggg=W_1$ be the Witt algebra over a field $\bbf$ of characteristic $p=2$. Then $\ggg$ is a two dimensional solvable Lie algebra with a basis $\{e_{-1}, e_0\}$ and subject to the relation $[e_{-1}, e_0]=e_{-1}$. Note that $W_1$ in this case is not a Lie algebra in Sect.3 of \cite{AKR}. 

For $i=-1,0$, let $\mathbb{D}_{i}=\ad(e_i)$, i.e.,  $\mathbb{D}_i(e_j)=\delta_{ij}e_{-1}$ for $j=-1,0$.   It follows from Lemma \ref{derivation lem} that
$\Der(\ggg)=\bbf \mathbb{D}_{-1}\oplus \bbf \mathbb{D}_{0}$. Let
\begin{eqnarray*}
\Delta:\, \ggg &\longrightarrow & \ggg\\
k_{-1}e_{-1}+k_0e_0&\mapsto &\begin{cases}
k_0e_{-1}, &\text{if}\,\,k_{-1}\neq 0, \vspace{1mm} \cr
0, &\text{if}\,\,k_{-1}=0,
\end{cases}\,\,\forall\,\,k_{-1}, k_0\in\bbf.
\end{eqnarray*}
We will show that $\Delta$ is a 2-local derivation on $\ggg$, but not a derivation. For  any $x=a_{-1}e_{-1}+a_0e_0, y=b_{-1}e_{-1}+b_0e_0\in\ggg$, we consider the following four cases.

Case 1: $a_{-1}=b_{-1}=0$.

In this case, take $D_{xy}=0$. Then
$$\Delta(x)=D_{xy}(x)=0,\quad \Delta(y)=D_{xy}(y)=0.$$

Case 2: $a_{-1}\neq 0, b_{-1}=0$.

In this case, take $D_{xy}=\frac{a_0}{a_{-1}}\mathbb{D}_{-1}\in\Der(\ggg)$. Then
$$\Delta(x)=D_{xy}(x)=a_0e_{-1},\quad \Delta(y)=D_{xy}(y)=0.$$

Case 3: $a_{-1}=0, b_{-1}\neq 0$.

In this case, take $D_{xy}=\frac{b_0}{b_{-1}}\mathbb{D}_{-1}\in\Der(\ggg)$. Then
$$\Delta(x)=D_{xy}(x)=0,\quad \Delta(y)=D_{xy}(y)=b_0e_{-1}.$$

Case 4: $a_{-1}\neq 0$ and $b_{-1}\neq 0$.

In this case, take $D_{xy}=\mathbb{D}_0\in\Der(\ggg)$. Then

$$\Delta(x)=D_{xy}(x)=a_0e_{-1},\quad \Delta(y)=D_{xy}(y)=b_0e_{-1}.$$

Therefore, $\Delta$ is a 2-local derivation on $\ggg$.  However,
$$\Delta(e_{-1}+e_{0})=e_{-1}\neq \Delta(e_{-1})+\Delta(e_0).$$
Hence, $\Delta$ is not a derivation.
\end{example}

\end{document}